\pgfplotsset{compat=newest}
\def\beq{\begin{equation}}
\def\eeq{\end{equation}}
\def\baq{\begin{eqnarray}}
\def\eaq{\end{eqnarray}}
\def\baqn{\begin{eqnarray*}}
\def\eaqn{\end{eqnarray*}}
\newcommand{\ball}{\mathbb{B}}
\theoremstyle{plain}
\newtheorem{definition}{Definition}
\newtheorem{remark}{Remark}
\newtheorem{example}{Example}
\newtheorem{theorem}{Theorem}
\newtheorem{corollary}[theorem]{Corollary}
\newtheorem{proposition}[theorem]{Proposition}
\newcommand{\R}{{\mathbb R}}
\newcommand{\N}{{\mathbb N}}
\newcommand{\interior}{{\rm int}\kern 0.06em}
\def\<{\langle}
\def\>{\rangle}
\newcommand\dom{{\rm dom}}%
\renewcommand*{\backrefalt}[4]{%
\ifcase #1 %
(Not cited)%
\or
(Cited on p.~#2)%
\else
(Cited on pp.~#2)%
\fi
}
\begin{document}
\title{{Compact R-Continuity with Applications to Solving Inclusions and Convergence of Algorithms}}
\author{Ba Khiet Le \thanks{Analytical and Algebraic Methods in Optimization Research Group, Faculty of Mathematics and Statistics, Ton Duc Thang University, Ho Chi Minh City, Vietnam.
 E-mail: \texttt{lebakhiet@tdtu.edu.vn}} \qquad Boris S. Mordukhovich\thanks{Department of Mathematics and Center for Artificial Intelligence and Data Science, Wayne State University, Detroit, MI 48202, USA.
 E-mail: \texttt{aa1086@wayne.edu}. Research of this author was partly supported by the US National Science Foundation under grant DMS-2204519 and by the Australian Research Council under Discovery Project DP-190100555} \qquad Michel  A. Th\' era \thanks{Mathematics and Computer Science Department, University of Limoges, 123 Avenue Albert Thomas,
87060 Limoges CEDEX, France.  E-mail: \texttt{michel.thera@unilim.fr}} }
\maketitle

\begin{abstract}
This paper investigates the notion of compact R-continuity and its specifications for set-valued mappings between Banach spaces. We reveal several important properties of compact R-continuity in general settings and show that in finite dimensions, this notion is supported by the classical \L ojasiewicz  inequality for analytic functions. An application of compact R-continuity and the obtained results {is} given to convergence analysis for a broad class of descent algorithms in nonsmooth optimization. We also show that this notion is instrumental for the design and justification of a novel R-class of algorithms to solve inclusion problems.
\end{abstract}

{\bf Keywords.} Nonlinear and variational analysis, set-valued mappings, compact R-continuity, nonsmooth optimization, convergence of algorithms\\

{\bf 2000 Mathematics Subject Classification.} 49J52, 49J53, 90C26

\section{Introduction}\label{intro}

A large number of problems of physical phenomena as well as in applied sciences (see, e.g.,\cite{abs,av,BC,Bento,br,br1,DonRoc09,L1,LT,Mordukhovich,Mordukhovich24,Nesterov3,Rockafellar,roc-wets,tbp} and the references therein) are described by inclusions of the type
\beq\label{main}
0\in \mathcal{A}(x),
\eeq 
where $\mathcal{A}:\mathbb{X} \rightrightarrows \mathbb{Y}$ is a set-valued mapping/multifunction between two Banach spaces $\mathbb{X}$ and $ \mathbb{Y}$. In order to find exact or approximate solutions to such inclusions, the qualitative properties of operators ${\cal A}$ are investigated, which are instrumental for the design and justification of numerical algorithms. A crucial issue in the study of \eqref{main} is related to stability of solutions in one or another sense. To address this issue, powerful tools of variational analysis have been developed in recent years. Among these tools, particular attention has been given to the properties of metric regularity and subregularity, various Lipschitz-type behaviors, etc  {(see, e. g., \cite{Adly,IOFFE,sacha2016,Lopez,NT})}. To proceed in this line, the notion of  {\it R-continuity} (Definition~\ref{rdef}) has been recently introduced  and developed in \cite{L1}, where its importance has been demonstrated in the study of stability of solutions to  (\ref{main}) and conducting convergence analysis of numerical algorithms to solve such inclusions. This concept generalizes the Lipschitz continuity property developed by Rockafellar  \cite{Rockafellar} when the solution set is a  singleton  and the modulus function is Lipschitz continuous. It has been realized that R-continuity is not as strong  as metric regularity being fulfilled for  a significantly broader class of operators. In particular, if ${\cal A}$ has closed graph at some point in its domain and is locally compact at this point, then the operator is R-continuous at the point in question; see \cite{L1,LT}. 

On the other hand, R-continuity is not too weak as calmness while being sufficient to ensure the {\it consistency}  of inclusion (\ref{main}) in the sense of  Hoffman \cite{Hoffman} meaning that a solution to the corresponding approximate problem is always near a solution to the original one, i.e., if $\Vert  y_\sigma \Vert$  is small enough and $y_\sigma\in \mathcal{A}(x_\sigma)$ for some $x_\sigma$, then there exists  $\bar x$ such that  $0\in \mathcal{A}(\bar x)$ and $\Vert x_\sigma-\bar x\Vert$ is  also small. The consistency in  Hoffman's sense  can be considered as a kind of {\it stability} and has a far-going impact to the development of sensitivity analysis in optimization and related areas.  In this sense, calmness is not sufficient to guarantee such a stability of (\ref{main}), while 
R-continuity is; see Section~\ref{s2}. 

The aforementioned Hoffman's consistency/stability is important in optimization as illustrated by the following discussion. Consider the minimization of an extended-real-valued, proper, convex, and lower semicontinuous function $f: \R^n \to\overline{\R}:=\R\cup \{\infty\}$. In this case, the minimization problem $\min_{x\in \R^n} f(x)$ is equivalently described by inclusion (\ref{main}), where  $\mathcal{A}=\partial f$ is the  subdifferential of $f$ in the sense of convex analysis, and where $S:=(\partial f )^{-1}(0)$ is the solution set to \eqref{main}. In practice,  due to  errors of  measurement, we can only find some $x_\sigma, y_\sigma \in \R^n$ such that $y_\sigma\in \partial f(x_\sigma)$ and $\Vert  y_\sigma \Vert$ is sufficiently small. Nevertheless, the R-continuity of $ (\partial f) ^{-1}$  at zero allows us to confirm that $x_\sigma$ is an approximate solution to \eqref{main} since there exists $\bar x\in S$ such that $x_\sigma$ is sufficiently close to $\bar x$. Consequently, the value of {$f( x_\sigma)$} approximates the optimal value $f(\bar x)$.  Conversely, the failure of the R-continuity of $(\partial f )^{-1}$ at zero tells us that $x_\sigma$ may be too far from the solution set $S$ (see Remark~\ref{rm1} for more details), and hence {$f( x_\sigma)$} is far from the optimal value.\vspace*{0.05in} 
 
The current paper shows that we can do even better in this direction. Based on the idea of R-continuity, the new notion of {\it compact R-continuity} is introduced (Definition~\ref{crdef}), which constitutes a weaker property compared to R-continuity that actually reduces to R-continuity restricted to compact sets. In this way, compact 
R-continuity does not require local compactness as R-continuity and thus broadly extends the spectrum of systems \eqref{main} that exhibit Hoffman's stability. This comes from the fact that in many reasonable settings, the (full)
R-continuity is not required and it is sufficient to deal just with its compact counterpart. A simple albeit crucial example showing that compact R-continuity is strictly weaker than R-continuity is given in 
Remark~\ref{rm1}. Indeed, it is sufficient for an operator to have a closed graph at some point in its domain to enjoy the compact R-continuity at this point. The converse is true if the operator value at the reference point is a closed set; see Theorem \ref{compact}. A remarkable result established below (Theorem~\ref{loja}) tells us that the compact R-continuity of the solution mapping associated with the analytic finite-dimensional equation in \eqref{main} can be derived from the seminal {\it \L ojasiewicz inequality} in the semialgebraic  geometry \cite{Lojasiewicz}. We'll discuss possible extensions of this compact R-continuity result to more general settings. 

Important applications of compact R-continuity discovered below concern {\it global convergence} of a large class of {\it descent algorithms} in problems of nonsmooth and nonconvex optimization. It has been well recognized in numerical optimization that such a convergence analysis can be efficiently conducted under the {\it 
Polyak-\L ojasiewicz-Kurdyka} (PKL) conditions imposed on extended-real-valued cost functions $f$; see, e.g., \cite{ab,abs,Bento,karimi,kurduka,Lojasiewicz1,polyak} and the references therein. The conditions of the PLK type ensure the global convergence of descent algorithms to stationary points of optimization problems defined via various subdifferentials of the cost function $f$, i.e., zeros of $\nabla f$ in the case of smooth functions. We show in Theorem~\ref{conver} that the desired convergence is guaranteed when PLK conditions are replaced by the (compact) R-continuity of the stationary solution mapping $(\partial f)^{-1}$. The results of this novel type lead us to significant advantages for a broad class where the (compact) R-continuity is satisfied.
Interestingly, compact R-continuity can explain why the boundedness of generated sequences can ensure obtaining approximate solutions in many important algorithms.

Our approach to convergence analysis of algorithms applies not only to optimization-related problems but to solving general inclusions of type \eqref{main}, where ${\cal A}$ may not be a subdifferential operator. In such settings, we propose a generic algorithm of {\it R-class} and establish conditions that guarantee the global convergence of iterates. The imposed conditions (Theorem~\ref{converg}) involve R-continuity and compact R-continuity assumptions but not assumptions of the PLK type. Our theoretical developments can be applied to obtained new convergence results for solving inclusion \eqref{main} governed by maximally monotone operators or shifts of the inverses by using the Proximal Point Algorithm and to  DC (Difference of Convex) programming by using the DCA (Difference of Convex Algorithm), where the compact R-continuity assumption holds automatically.

The rest of paper is organized as follows. Section~\ref{s2} recalls the needed preliminaries from variational analysis, {then} defines and discusses the basic notions of our study. In Section~\ref{s3}, we establish some fundamental properties of compactly R-continuous multifunctions with deriving sufficient conditions for this property and its relationship with the \L ojasiewicz inequality for analytic functions. Section~\ref{s4} focuses on the global convergence of generic optimization algorithms, where the R-continuity and compact R-continuity properties play a crucial role. In Section~\ref{sec:inc}, we address solving general inclusions \eqref{main} and their maximally monotone specifications  with the design and justification of novel R-class algorithms. The final Section~\ref{s5} contains concluding remarks and discusses some directions of our future research. Everywhere in this paper, we use the standard notation and terminology of variational analysis; see, e.g., \cite{Mordukhovich,roc-wets}.
 
\section{Basic Definitions and Preliminaries}\label{s2}

Let  $\mathcal{A}: \mathbb{X} \rightrightarrows \mathbb{Y}$ be a set-valued mapping between Banach spaces, and let $\bar x$ be such that ${\cal A}(\bar x)\ne\emptyset$.  The {\it domain}, {\it range,} and  {\it graph} of $\mathcal{A}$ are defined, respectively, by 
$$
{\rm dom}\,\mathcal{A}:=\big\{x\in \mathbb{X}\;\big|\;\mathcal{A}(x)\neq \emptyset\big\},\;\;{\rm rge}\,\mathcal{A}:=\cup_{x\in \mathbb{X}}\mathcal{A}(x),\;\;{\rm gph \,}{\mathcal{A}:=\big\{(x,y)\in \mathbb{X}\times \mathbb{Y}\;\big|\;y\in\mathcal{A}(x})\big\}.
$$
As usual, denote by $\mathcal{A}^{-1} :\mathbb{Y}\rightrightarrows \mathbb{X}$ the {\it inverse} of $\mathcal{A}$ defined by 
$$
x\in \mathcal{A}^{-1}(y)\; \iff \;  y\in \mathcal{A}(x).
$$ 
We say that $\mathcal{A}: \mathbb{X} \rightrightarrows \mathbb{Y}$  is  \textit{metrically regular} around $(\bar{x},\bar{y})\in {\rm gph}(\mathcal{A})$  if there exists $\kappa>0$ such that
 \begin{equation}\label{metric}
\mathbf{d}(x,\mathcal{A}^{-1}(y))\le \kappa \mathbf{d}(y,\mathcal{A}(x))
\end{equation}
for all $(x,y)$ near $(\bar{x},\bar{y})$, where $\mathbf{d}(x,\Omega):=\inf_{y\in \Omega}\Vert x-y\Vert$ denotes the {\it distance function} from $x$ to the  set $\Omega$.  The multifunction $\mathcal{A}$ is \textit{metrically subregular} at $(\bar{x},\bar{y})\in {\rm gph}\,\mathcal{A}$ if $y=\bar y$ in (\ref{metric}), i.e.,
\begin{equation*}
\mathbf{d}(x,\mathcal{A}^{-1}(\bar{y}))\le \kappa \mathbf{d}(\bar{y},\mathcal{A}(x))
\end{equation*} 
for all $x$ near $\bar{x}$. It is said that $\mathcal{A}: \mathbb{X} \rightrightarrows \mathbb{Y}$ is {\it Lipschitz-like} (or has the {\it Aubin property}) around $(\bar{x},\bar{y})\in {\rm gph}\,\mathcal{A}$ if  there exist $\kappa>0$ and some neighborhoods $U$ of $\bar{x}$ and $V$ of $\bar{y}$ such that 
\beq\label{aubin}
\mathbf{d}(y_1,\mathcal{A}(x_2))\le \kappa \mathbf{d}(x_1,x_2) \;{\rm for \; all\;} y_1\in \mathcal{A}(x_1) \cap V \;{\rm and \;} x_1, x_2\in U.
\eeq 
We say that $\mathcal{A}$ is {\it calm} at $(\bar{x},\bar{y})$  if $x_2=\bar{x}$ in (\ref{aubin}), i.e., 
\beq\label{calm}
\mathbf{d}(y_1,\mathcal{A}(\bar{x}))\le \kappa \mathbf{d}(x_1,\bar{x}) \;{\rm for \; all\;} y_1\in \mathcal{A}(x_1) \cap V \;{\rm and\;} x_1\in U.
\eeq 

It has been well recognized in variational analysis that $\mathcal{A}$ is metrically regular around $(\bar{x},\bar{y})\in {\rm gph}\,\mathcal{A}$ if and only if and only if the inverse mapping $\mathcal{A}^{-1}$ is 
Lipschitz-like around $(\bar{y},\bar{x})$. Similarly, the metric subregularity of $\mathcal{A}$ at $(\bar{x},\bar{y})\in {\rm gph}(\mathcal{A})$ is equivalent to the calmness of $\mathcal{A}^{-1}$ at $(\bar{y},\bar{x})$ {(see, e.g., \cite{IOFFE})}. However, metric regularity and Lipschitz-like property may be too strong, since they require the corresponding estimate to hold in all the points of a neighborhood of the reference one that is not always needed.  On the other hand, metric subregularity and calmness are rather weak to provide useful information on the desired stability. This is the case when  $\mathcal{A}(x_1) \cap V$ is an empty set in (\ref{aubin}); see \cite{LT}) for more discussions. Moreover, even if the set  $\mathcal{A}(x_1) \cap V$  is nonempty, there exists some element $y\in \mathcal{A}({x_1})\setminus V$ not satisfying (\ref{calm}), i.e., it can be far enough from the solution set of $\mathcal{A}(\bar x)$. This was a motivation in \cite{L1,LT} to design something in the middle, which still be able nevertheless to serve our initial goal discussed in Section~\ref{intro}. 

\begin{definition}\label{rdef}
The set-valued mapping \noindent $\mathcal{A}:\mathbb{X} \rightrightarrows \mathbb{Y}$ is  called {\sc R-continuous} at ${\bar x}  $ if there exist $\sigma>0$ and a nondecreasing function $\rho: \mathbb{R}^+\to \mathbb{R}^+$ satisfying $\lim_{r\to 0^+}\rho(r)=\rho(0)=0$  such that
 \begin{equation}\label{rcon}
\mathcal{A}(x) \subset \mathcal{A}({\bar x}  )+\rho(\Vert x-{\bar x}   \Vert)\ball\;\mbox{ for all }\;x\in  \ball({\bar x} ,\sigma)
\end{equation}
with the {\sc continuity modulus function} $\rho$ and {\sc radius} $\sigma$. When $\sigma=\infty$, $\mathcal{A}$ is  called {\sc globally R-continuous} at ${\bar x}$. The inclusion in \eqref{rcon} means that for each $y\in \mathcal{A}(x)$, there exists ${\bar y}  \in  \mathcal{A}({\bar x} )$ satisfying the estimate $\Vert y-{\bar y}  \Vert\le \rho(\Vert x-{\bar x}   \Vert)$ for all $x\in \ball({\bar x},\sigma)$. We say that 
\begin{itemize}
\item $\mathcal{A}$ is {\sc $R$-Lipschitz continuous} at ${\bar x}  $ with modulus $L>$  if $\rho( r)=Lr$.
\item $\mathcal{A}$ is {\sc $R$-H\"older continuous} at ${\bar x}  $  if $\rho( r)=Lr^\theta$ for some $L>0, \theta>0$.
\end{itemize}
\end{definition}\vspace*{0.05in}

Note that the R-continuity in \eqref{rcon} is obviously equivalent to
\begin{equation*}
e(\mathcal{A}(x), \mathcal{A}({\bar x}))\le\rho(\Vert x-{\bar x}   \Vert)\;\mbox{ for all }\;x\in \ball({\bar x}  ,\sigma),
\end{equation*}
where ${e}(A,B):=\sup_{x\in A}\mathbf{d}(x,B)$  signifies the {\it excess} of the set $A$ over $B$. In this terms, we see that:

$\bullet$ $\mathcal{A}$ is {\it $R$-Lipschitz continuous} at ${\bar x}  $ with modulus $L>0$ if and only if there is $\sigma>0$ such that 
\begin{equation*}
\sup_{x\in \ball({\bar x} ,\sigma)}\frac{e(\mathcal{A}(x), \mathcal{A}({\bar x}))}{\Vert x-{\bar x}   \Vert}\le L
\end{equation*}
with the convention $\frac{0}{0}:=0$.

$\bullet$ $\mathcal{A}$ is {\it $R$-H\"older continuous} at ${\bar x}  $ if and only if there are numbers $\sigma, L,\theta>0$ such that 
\begin{equation*}
\sup_{x\in \ball({\bar x}  ,\sigma)}\frac{e(\mathcal{A}(x), \mathcal{A}({\bar x}))}{\Vert x-{\bar x}   \Vert^\theta}\le L.
\end{equation*}\vspace*{0.05in}

Next we introduce the notion of {\it compact R-continuity} of ${\cal A}$, which is actually the R-continuity of this mapping restricted on compact subsets of $\mathbb X$. 

\begin{definition}\label{crdef}
A set-valued mapping \noindent $\mathcal{A}:\mathbb{X} \rightrightarrows \mathbb{Y}$ is  called {\sc compactly 
R-continuous} at ${\bar x} $ if for every compact set $K\subset \mathbb{X}$, there exists $\sigma>0$ such that we can find a nondecreasing function $\rho: \mathbb{R}^+\to \mathbb{R}^+$ satisfying $\lim_{r\to 0^+}\rho(r)=\rho(0)=0$ with
\begin{equation}\label{rconc}
\mathcal{A}(x)\cap K \subset \mathcal{A}({\bar x}  )+\rho(\Vert x-{\bar x}   \Vert)\ball\;\mbox{ for all }\;x\in  \ball({\bar x} ,\sigma).
\end{equation}
Similarly to Definition~{\rm\ref{rdef}}, we say that 
\begin{itemize}
\item $\mathcal{A}$ is {\sc compactly $R$-Lipschitz continuous} at ${\bar x} $ with modulus $L>0$ if $\rho( r)=Lr$.

\item $\mathcal{A}$ is {\sc compactly $R$-H\"older continuous} at ${\bar x}$ if $\rho( r)=Lr^\theta$ for some 
$L, \theta>0$.
\end{itemize}
\end{definition}\vspace*{0.05in}

As above, we have the equivalent descriptions of the notions from Definition~\ref{crdef} via the excess: 
\begin{itemize}
\item $\mathcal{A}$ is compactly $R$-Lipschitz continuous at ${\bar x}  $ with modulus $L>0$ if and only if for every compact set $K\subset \mathbb{X}$, there exists $\sigma>0$ such that
$$
\sup_{x\in \ball({\bar x}  ,\sigma)}\frac{e(\mathcal{A}(x)\cap K, \mathcal{A}({\bar x}))}{\Vert x-{\bar x}   \Vert}\le L.
$$
\item $\mathcal{A}$ is compactly $R$-H\"older continuous at ${\bar x}$  if and only if for every compact set $K\subset \mathbb{X}$, there exist constants $\sigma,L,\theta>0$ such that 
$$
\sup_{x\in \ball({\bar x} ,\sigma)}\frac{e(\mathcal{A}(x)\cap K, \mathcal{A}({\bar x}))}{\Vert x-{\bar x}   \Vert^\theta}\le L.
$$
\end{itemize}

\begin{remark}\label{rm1} {\rm 
The notion of  compact R-continuity is  strictly weaker than that of R-continuity.  If $\mathcal{A}$ is R-continuous at ${\bar x}$, then it is obviously compactly R-continuous at this point. However, the converse implications fails in general. Indeed, taking $\mathbb{X}=\mathbb{Y}=\R$, $A(0)=0$, and $A(x)= \{x,\frac{1}{x}\}$ for $x\neq 0$, we see that $A$ is compactly R-Lipschitz continuous but not R-continuous.} 
\end{remark}

The following proposition comes directly from the above definitions.

\begin{proposition}\label{localc}
If $\mathcal{A}$ is locally compact around  ${\bar x}$, i.e.,  there is $\sigma>0$ such that $\mathcal{A}( \ball({\bar x}  ,\sigma))$ belongs to a compact set, then the compact R-continuity of ${\cal A}$ at ${\bar x}$ yields the R-continuity ${\cal A}$ at  this point.
\end{proposition}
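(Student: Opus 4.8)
The plan is to feed the definition of compact R-continuity a single, carefully chosen compact set --- namely the one handed to us by the local compactness hypothesis --- and then to notice that on a small enough ball the intersection with this compact set does nothing at all, so that the compact R-continuity estimate collapses into the full R-continuity estimate.

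First I would use local compactness to fix a radius $\sigma_0>0$ and a compact set $K_0$ (in the target space, where the values $\mathcal{A}(x)$ live) with $\mathcal{A}(\ball(\bar x,\sigma_0))\subset K_0$. Next I would apply compact R-continuity at $\bar x$ \emph{to this particular} $K=K_0$: Definition~\ref{crdef} then yields a radius $\sigma_1>0$ and a nondecreasing modulus $\rho$ satisfying $\lim_{r\to 0^+}\rho(r)=\rho(0)=0$ such that $\mathcal{A}(x)\cap K_0\subset \mathcal{A}(\bar x)+\rho(\Vert x-\bar x\Vert)\ball$ for all $x\in\ball(\bar x,\sigma_1)$. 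Finally I would set $\sigma:=\min\{\sigma_0,\sigma_1\}$. The decisive observation is that for $x\in\ball(\bar x,\sigma)$ we have $x\in\ball(\bar x,\sigma_0)$, hence $\mathcal{A}(x)\subset K_0$ and therefore $\mathcal{A}(x)\cap K_0=\mathcal{A}(x)$. Substituting this identity into the compact R-continuity inclusion gives $\mathcal{A}(x)\subset\mathcal{A}(\bar x)+\rho(\Vert x-\bar x\Vert)\ball$ for every $x\in\ball(\bar x,\sigma)$, which is precisely R-continuity of $\mathcal{A}$ at $\bar x$ in the sense of Definition~\ref{rdef}, with radius $\sigma$ and the same modulus $\rho$.

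I expect no genuine obstacle here: the result is essentially a one-line consequence of matching the abstract compact set in Definition~\ref{crdef} with the concrete one supplied by local compactness. The only point requiring minor care is the bookkeeping of the two radii and taking their minimum, so that the compactness containment $\mathcal{A}(\ball(\bar x,\sigma_0))\subset K_0$ and the compact R-continuity estimate on $\ball(\bar x,\sigma_1)$ hold simultaneously. The Lipschitz and H\"older specializations follow verbatim, since the explicit form $\rho(r)=Lr$ or $\rho(r)=Lr^\theta$ is preserved unchanged through the argument.
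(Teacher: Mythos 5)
Your argument is correct and is precisely the one the paper intends: the proposition is stated there without proof as following "directly from the above definitions," and your choice of $K=K_0$ from the local compactness hypothesis, together with shrinking the radius so that $\mathcal{A}(x)\cap K_0=\mathcal{A}(x)$, is exactly that direct deduction. Your parenthetical correction that the compact set lives in the target space $\mathbb{Y}$ (rather than $\mathbb{X}$ as written in Definition~\ref{crdef}) is also the right reading, consistent with how $K$ is used in the proof of Theorem~\ref{compact}.
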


Observe that when $\mathbb{Y}=\mathbb{R}^m$, the local compactness of the multifunction  $\mathcal{A}$ around ${\bar x}\in\dom\,{\cal A}$ is equivalent to the {\it local boundedness} of ${\cal A}$ around this point, i.e., the existence of $\sigma>0$ such that the set $\mathcal{A}(\ball({\bar x} ,\sigma))$ is  bounded in $\R^m$.

\section{Fundamental Properties of Compact R-continuity}\label{s3}

In this section, we establish some important properties of compactly R-continuous multifunctions that are crucial for their subsequent applications. The next definition formulates the property of general set-valued mappings that is useful in what follows. 

\begin{definition}\label{closed graph} Given a set-valued mapping $\mathcal{A}:\mathbb{X} \rightrightarrows \mathbb{Y}$, we say that the {\sc graph} of ${\cal A}$ is $($sequentially$)$  {\sc closed at a domain point} $\bar x\in\dom\,{\cal A}$ if for any sequences $x_k\to\bar x$ and any sequence $y_k\in{\cal A}(x_k)$ converging to some $y$ as $k\to\infty$, we have $y\in{\cal A}(\bar x)$.
\end{definition}

The following theorem provides a sequential characterization of the compact R-continuity of set-valued mappings. Without loss of generality, we suppose that $\bar x=0$.

\begin{theorem}\label{compact}
A set-valued mapping $\mathcal{A}:\mathbb{X}\rightrightarrows \mathbb{Y}$ is compactly R-continuous at zero {and $\mathcal{A}(0)$ is closed} if and only if its graph is closed at this point.
\end{theorem}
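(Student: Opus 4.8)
The statement is an equivalence, with the normalization $\bar x = 0$ already in force, so the plan is to prove the two implications separately. Consider first the implication \emph{graph closed at $0$ $\Rightarrow$ compact R-continuity at $0$ together with closedness of $\mathcal{A}(0)$}. The closedness of $\mathcal{A}(0)$ is immediate: applying Definition~\ref{closed graph} to the constant sequence $x_k\equiv 0$ shows that every limit of a convergent sequence drawn from $\mathcal{A}(0)$ again lies in $\mathcal{A}(0)$. For the compact R-continuity, I would fix a compact set $K\subset\mathbb{Y}$ and produce an explicit modulus by setting
\[
\rho(r):=\sup_{\Vert x\Vert\le r} e\big(\mathcal{A}(x)\cap K,\ \mathcal{A}(0)\big),\qquad r\ge 0,
\]
with the convention $e(\emptyset,\cdot)=0$. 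Since $\mathbf{d}(\cdot,\mathcal{A}(0))$ is continuous and $K$ is compact, $\rho$ is finite-valued; it is nondecreasing and satisfies $\rho(0)=0$ by construction, and the required inclusion $\mathcal{A}(x)\cap K\subset\mathcal{A}(0)+\rho(\Vert x\Vert)\ball$ holds for every $x$ (so any radius $\sigma$ works). Thus the entire content of compact R-continuity collapses to the single claim $\lim_{r\to 0^+}\rho(r)=0$.

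I would establish this last claim by contradiction. If $\lim_{r\to 0^+}\rho(r)=c>0$, then monotonicity of $\rho$ gives $\rho(r)\ge c$ for all $r>0$, so one can extract $x_n\to 0$ together with points $y_n\in\mathcal{A}(x_n)\cap K$ satisfying $\mathbf{d}(y_n,\mathcal{A}(0))>c/2$. Here the compactness of $K$ is exactly what drives the argument: it yields a subsequence $y_{n_j}\to y\in K$, and closedness of the graph at $0$, applied to $x_{n_j}\to 0$ and $y_{n_j}\in\mathcal{A}(x_{n_j})$, forces $y\in\mathcal{A}(0)$. But the $1$-Lipschitz continuity of $\mathbf{d}(\cdot,\mathcal{A}(0))$ then gives $\mathbf{d}(y,\mathcal{A}(0))\ge c/2>0$, contradicting $y\in\mathcal{A}(0)$. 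This is the crux of the whole proof.

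For the reverse implication \emph{compact R-continuity and closedness of $\mathcal{A}(0)$ $\Rightarrow$ graph closed at $0$}, take sequences $x_k\to 0$ and $y_k\in\mathcal{A}(x_k)$ with $y_k\to y$; the goal is $y\in\mathcal{A}(0)$. The decisive step is to test compact R-continuity against the tailor-made compact set $K:=\{y_k:k\in\N\}\cup\{y\}$, which is compact precisely because it is a convergent sequence together with its limit. For all large $k$ one has $\Vert x_k\Vert<\sigma$ and $y_k\in\mathcal{A}(x_k)\cap K$, so the inclusion \eqref{rconc} provides $z_k\in\mathcal{A}(0)$ with $\Vert y_k-z_k\Vert\le\rho(\Vert x_k\Vert)\to 0$; hence $z_k\to y$, and closedness of $\mathcal{A}(0)$ yields $y\in\mathcal{A}(0)$.

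The manipulations of the excess and the modulus are routine bookkeeping; the load-bearing ingredients are the two appearances of compactness — extracting $y_{n_j}$ in the first implication and the bespoke choice $K=\{y_k\}\cup\{y\}$ in the second. I expect the main obstacle to be the careful verification in the forward implication that the supremum-defined $\rho$ is a legitimate modulus and that its right limit at $0$ vanishes; once the compactness of $K$ is brought to bear there, everything else is mechanical.
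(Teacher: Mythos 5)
Your proof is correct and follows essentially the same route as the paper's: both directions hinge on the same two uses of compactness (extracting a convergent subsequence from $K$ to contradict a positive limit of the modulus, and testing against the compact set formed by a convergent sequence in the reverse direction), with your supremum-of-excess modulus being just the excess reformulation of the paper's infimum-based definition of $\rho$. The only (harmless) cosmetic differences are that you phrase the modulus via the excess and explicitly include the limit point $y$ in the compact set $K$ for the reverse implication, which the paper omits.
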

\begin{proof}  
To deduce the compact R-continuity at zero from the imposed closed graph property, observe first the latter obviously ensures that the set $\mathcal{A}(0)$ is closed. Given a compact set $K\subset \mathbb{X}$, define the function $\rho: \mathbb{R}^+\to \mathbb{R}^+$ by
\beq\label{defro}
\rho(\sigma):=\inf\big\{\delta>0\;\big|\;\mathcal{A}(x) \cap K \subset \mathcal{A}(0)+\delta\ball\;\mbox{ for all }\;x\in \sigma \ball\},\quad \sigma\ge 0.
\eeq
It is easy to see that $\rho$ is well-defined and nondecreasing with $\rho(0)=0$.  It follows from the nondecreasing property of $\rho$ and its boundedness from below that the limit $\lim_{\sigma\to 0^+}\rho(\sigma)$ exists. Suppose that  $\lim_{\sigma\to 0^+}\rho(\sigma):=\overline{\delta}>0$. Then there exist two sequences $(x_k)$, $(y_k)$ such that $x_k \to 0$, $y_k\in \mathcal{A}(x_k)\cap K$,  and we have
\beq\label{closedr}
y_k\notin \mathcal{A}(0)+\frac{\overline\delta}{2}\ball.
\eeq
Since the set  $K$ is compact and the graph of $\mathcal{A}$ is closed at zero, it follows without relabeling that the sequence $(y_k)$ converges to some  $\bar y \in\mathcal{A}(0)$ as $k\to\infty$. This clearly contradicts  the condition in $(\ref{closedr})$ and hence shows that  $\lim_{\sigma\to 0^+}\rho(\sigma)=0$. Picking now $x\in \mathbb{X}$ with $\Vert x \Vert=\sigma$, we deduce from (\ref{defro}) and the closedness of $\mathcal{A}(0)$ that
$$
\mathcal{A}(x) \cap K \subset \mathcal{A}(0)+\rho(\sigma)\ball = \mathcal{A}(0)+\rho(\Vert x \Vert)\ball, 
$$
which readily justifies the compact R-continuity property of $\mathcal{A}$ at zero.

To verify the reverse implication, take $x_k\to 0$ and $y_k\to y$ as $k\to\infty$ with $y_k\in \mathcal{A}(x_k)$ and then define the set $K:=\{ y_k\;|\;k\in\N\}$, which is obviously compact in $Y$. It follows from the 
compact R-continuity of $\mathcal{A}$  at zero that
$$
y_k\in \mathcal{A}(x_k) \cap K\subset \mathcal{A}(0)+\rho(\Vert x_k\Vert)\ball\;\mbox{ for all }\;k=1,2,\ldots.
$$
Since $y_k\to y$, $\rho(\Vert x_k \Vert)\to 0$ and the set $\mathcal{A}(0)$ is closed, we get that  $y\in \mathcal{A}(0)$, and thus the graph of $\mathcal{A}$ is closed at zero. This completes the proof of the theorem.
\end{proof}

\begin{corollary}\label{corrc}
If the mapping $\mathcal{A}:\mathbb{X}\rightrightarrows \mathbb{Y}$ is locally compact around zero and its graph is closed {at zero}, then ${\cal A}$ is R-continuous at this point.
\end{corollary}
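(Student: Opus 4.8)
The plan is to obtain this corollary by simply chaining the two results already in hand, namely Theorem~\ref{compact} and Proposition~\ref{localc}. The closed-graph hypothesis is exactly the input to Theorem~\ref{compact}, whose output (compact R-continuity at zero) combines with the local compactness hypothesis to match the input of Proposition~\ref{localc}, whose output is precisely the desired R-continuity.

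First I would apply Theorem~\ref{compact} at the reference point $\bar x=0$: since the graph of $\mathcal{A}$ is closed at zero, that theorem guarantees both that $\mathcal{A}(0)$ is a closed set and that $\mathcal{A}$ is compactly R-continuous at zero. Having compact R-continuity available, I would then invoke Proposition~\ref{localc}: the local compactness of $\mathcal{A}$ around zero upgrades compact R-continuity to full R-continuity at zero, which is the assertion to be proved.

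The only point deserving a moment of care lies in the interplay of quantifiers between the two notions, and it is already absorbed into the proof of Proposition~\ref{localc}. Local compactness supplies a radius $\sigma_0>0$ and a compact set $K$ with $\mathcal{A}(\ball(0,\sigma_0))\subset K$, while compact R-continuity, applied to this specific $K$, supplies a radius $\sigma_1>0$ and a modulus $\rho$ with $\mathcal{A}(x)\cap K\subset\mathcal{A}(0)+\rho(\Vert x\Vert)\ball$ whenever $\Vert x\Vert<\sigma_1$. For $\Vert x\Vert<\min\{\sigma_0,\sigma_1\}$ one has $\mathcal{A}(x)\subset K$, hence $\mathcal{A}(x)\cap K=\mathcal{A}(x)$, and the compact R-continuity inclusion collapses to the full R-continuity inclusion $\mathcal{A}(x)\subset\mathcal{A}(0)+\rho(\Vert x\Vert)\ball$ on the ball of radius $\min\{\sigma_0,\sigma_1\}$. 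Since this bookkeeping is exactly what Proposition~\ref{localc} records, there is no genuine obstacle to overcome; the entire substance of the corollary resides in the two results being combined, and the argument is purely a deductive composition of them.
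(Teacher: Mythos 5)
Your proof is correct and follows exactly the same route as the paper: the paper's own proof of Corollary~\ref{corrc} is the one-line deduction ``This follows directly from Theorem~\ref{compact} and Proposition~\ref{localc}.'' Your additional paragraph spelling out the radius/compact-set bookkeeping behind Proposition~\ref{localc} is a sound and welcome elaboration of a step the paper leaves implicit, but it does not change the argument.
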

\begin{proof}
This follows directly from Theorem~\ref{compact} and Proposition~\ref{localc}.
\end{proof}

The next result provides a simple condition allowing us to deduce the R-Lipschitz continuity at zero from the 
R-continuity at zero for a class of inverse mappings in finite dimensions.

\begin{theorem}\label{rconti} Let $f:\R^n\to\R^m$ with $m\ge n$ be continuously differentiable and the Jacobian matrix $\nabla f$ have full rank on the solution set $S:=f^{-1}(0)$. If the inverse mapping $f^{-1}$ is R-continuous at zero, then it is R-Lipschitz continuous at this point.
\end{theorem}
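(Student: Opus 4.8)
The plan is to read the target property in its excess form: R-Lipschitz continuity of $f^{-1}$ at $0$ means there are $\sigma,L>0$ with $\mathbf{d}(x,S)\le L\norm{f(x)}$ whenever $\norm{f(x)}\le\sigma$, where $S=f^{-1}(0)$. The hypothesized R-continuity already gives the nonlinear version $\mathbf{d}(x,S)\le\rho(\norm{f(x)})$ on a neighborhood of $0$; its only role here is to force points with small $\norm{f(x)}$ into an arbitrarily thin tube around $S$, so the entire task is to \emph{upgrade the modulus $\rho$ to a linear one}. First I would fix $\bar x\in S$ and use that the Jacobian $\nabla f(\bar x)$ is an injective (full column rank) $m\times n$ matrix, so its smallest singular value $c(\bar x)>0$ yields $\norm{\nabla f(\bar x)v}\ge c(\bar x)\norm{v}$. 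By continuity of $\nabla f$ and of the smallest singular value, combined with the integral identity $f(x)=\int_0^1\nabla f(\bar x+t(x-\bar x))(x-\bar x)\,dt$ (using $f(\bar x)=0$), there is a radius $r(\bar x)>0$ on which the \emph{immersion estimate} $\norm{f(x)}\ge\tfrac12 c(\bar x)\norm{x-\bar x}$ holds for all $x\in\ball(\bar x,r(\bar x))$.

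Granting for a moment that these pointwise estimates can be made uniform, i.e. that there exist $c_0,r_0>0$ with
\[
\norm{f(x)}\ge c_0\,\norm{x-\bar x}\quad\text{for all }\bar x\in S \text{ and } x\in\ball(\bar x,r_0),
\]
the conclusion follows in one line. Choose $\sigma>0$ so small that $\rho(\sigma)<r_0$, take any $x$ with $\norm{f(x)}\le\sigma$, and let $\bar x\in S$ realize $\mathbf{d}(x,S)=\norm{x-\bar x}$ (a nearest point exists since $S=f^{-1}(0)$ is closed in $\R^n$). Since $\rho$ is nondecreasing, R-continuity gives $\norm{x-\bar x}=\mathbf{d}(x,S)\le\rho(\norm{f(x)})\le\rho(\sigma)<r_0$, so the uniform estimate applies and delivers $\mathbf{d}(x,S)=\norm{x-\bar x}\le c_0^{-1}\norm{f(x)}$. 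This is exactly R-Lipschitz continuity at $0$ with modulus $L=c_0^{-1}$ and radius $\sigma$.

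The main obstacle is precisely the uniform estimate. When $S$ is bounded (hence compact), a standard contradiction argument closes it: negating uniformity produces $\bar x_k\in S$ and $x_k$ with $\norm{x_k-\bar x_k}\to 0$ and $\norm{f(x_k)}/\norm{x_k-\bar x_k}\to 0$; setting $u_k=(x_k-\bar x_k)/\norm{x_k-\bar x_k}$ and passing to the limit in the mean-value identity along a convergent subsequence $\bar x_k\to\bar x^\ast\in S$, $u_k\to u^\ast$ (using continuity of $\nabla f$) gives $\nabla f(\bar x^\ast)u^\ast=0$ with $\norm{u^\ast}=1$, contradicting full rank. The delicate case is an \emph{unbounded} $S$, where the local constants $c(\bar x)$ could a priori degenerate to $0$, and the radii $r(\bar x)$ shrink, along a sequence escaping to infinity, so that no uniform pair $(c_0,r_0)$ exists. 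This is exactly where the full strength of R-continuity is meant to enter: a single modulus $\rho$ valid along \emph{all} of $S$ should preclude such degeneration, since a direction of vanishing singular value at $\bar x_k\to\infty$ would generate preimages of small values $y$ whose distance to $S$ is not dominated by any fixed $\rho(\norm{y})$. Making this rigorous — simultaneously ruling out the degeneration of $c(\bar x)$, the shrinkage of $r(\bar x)$, and the possible clustering of nearby points of $S$ from the one inequality defining R-continuity — is where I expect the genuine difficulty to lie, and it is the step I would spend the most care on.
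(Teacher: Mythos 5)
Your proposal follows the same route as the paper's proof: R-continuity supplies a solution $u\in S$ near $x$, a first-order expansion of $f$ around $u$ together with the full-rank lower bound $\norm{\nabla f(u)(x-u)}\ge c\norm{x-u}$ yields $\norm{f(x)}\ge c\norm{x-u}$, and this converts the modulus into a linear one. The step you single out as the genuine difficulty --- obtaining a \emph{single} pair $(c_0,r_0)$ valid for every $u\in S$ --- is exactly the step the paper does not carry out: its proof writes ``there is $c>0$'' with a reference and then uses that one constant in the final inclusion for all $y$ and all $x\in f^{-1}(y)$, even though $c$ and the range of validity of the Taylor remainder a priori depend on $u$. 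Your compactness argument closes this for bounded $S$, which is already more than the paper supplies.

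Unfortunately, your hope that R-continuity alone forbids degeneration along an unbounded $S$ is not borne out, so the remaining step is not merely delicate --- it fails. Sketch with $n=m=1$: let $S=\{k\in\N:\ k\ge 10\}$ and let $f$ be $C^1$, alternating in sign on consecutive intervals $[k,k+1]$, with $|f|(x)=\tfrac{x-k}{\log k}$ for $x\in[k,\,k+\tfrac{1}{\log k}]$, then increasing with slope bounded below by a fixed $s>0$ until it reaches the value $1$ on the middle third of $[k,k+1]$, and symmetrically back down to $0$ at $k+1$. Then $f'(k)=\pm\tfrac{1}{\log k}\ne 0$ on $S$, and for $|y|\le\tfrac12$ every point of $f^{-1}(y)$ lies within $\sqrt{|y|}+|y|/s$ of $S$, so $f^{-1}$ is R-continuous (even R-H\"older) at zero. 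Yet $x_k:=k+\tfrac{1}{\log k}$ satisfies $|f(x_k)|=\tfrac{1}{(\log k)^2}\to 0$ while $\mathbf{d}(x_k,S)=\tfrac{1}{\log k}$, so $\mathbf{d}(x_k,S)/|f(x_k)|=\log k\to\infty$ and no linear modulus exists. Thus the uniformity cannot be extracted from the stated hypotheses; one needs, e.g., compactness of $S$ (your case), or positivity of $\inf_{u\in S}$ of the smallest singular value of $\nabla f(u)$ together with a uniform modulus of continuity of $\nabla f$ near $S$. You located the gap correctly; it is a gap in the paper's own proof as well, and it cannot be closed as the statement stands.
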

\begin{proof} Assuming that $f^{-1}$ is R-continuous at zero with radius $\sigma>0$, take $y\in \R^n$ with $\Vert y\Vert\le \sigma$ and $x\in f^{-1}(y)$. The R-continuity of $f^{-1}$ at zero gives us some $u\in S$ near $x$. Using  the Taylor expansion around $u$ (the radius can be adjusted if necessary), we can write
$$
y=f(x)=f(u)+\nabla f(u)(x-u)+ o(\Vert x-u\Vert)=\nabla f(u)(x-u)+ o(\Vert x-u\Vert).
$$
Since $\nabla f(u)$ is of full rank and $\Vert x-u\Vert$ is small, there is $c>0$ (see, e.g,, \cite{L1,tbp}) such that
$$
\Vert y\Vert=\Vert \nabla f(u)(x-u)+ o(\Vert x-u\Vert)\Vert\ge \Vert\nabla f(u)(x-u)\Vert- o(\Vert x-u\Vert) \ge c\Vert  x-u\Vert.
$$
This readily leads us to the inclusion
$$
f^{-1}(y) \subset  f^{-1}(0)+\frac{\Vert y\Vert}{c} \ball\;\mbox{ for all }\;y\in \ball(0,\sigma),
$$
which therefore verifies the claimed assertion.
\end{proof}

\begin{remark} {\rm Theorem~\ref{rconti} is an extension of \cite[Proposition~4]{Rockafellar} in finite-dimensional spaces when $S=\{\bar x\}$ is singleton which requires that $f: \R^n\to \R^n$ is single-valued and $C^1$ in a neighborhood of $\bar x$ with $\nabla f(\bar x)$ being invertible. Then $f^{-1}$ is differentiable at zero which implies that $f^{-1}$ is of closed graph  at zero and locally bounded around this point zero. Thus $f^{-1}$ is R-continuous at zero (Corollary ~\ref{corrc}) and then Lipschitz R-continuous therein.}
\end{remark}

The following theorem tells us that the general assumptions of Theorem~\ref{rconti} unconditionally guarantee that the inverse mapping $f^{-1}$ is compactly R-Lipschitz continuous at zero.

\begin{theorem}\label{cr-inv} Let $f:\R^n\to\R^m$ with $m\ge n$ be continuously differentiable and $\nabla f$ have full rank on the solution set $S:=f^{-1}(0)$. Then $f^{-1}$ is compactly R-Lipschitz continuous at zero.
\end{theorem}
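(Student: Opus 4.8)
The plan is to argue by contradiction for each fixed compact set, letting compactness supply the nearby solution point that the R-continuity hypothesis provided in the proof of Theorem~\ref{rconti}. Concretely, I would fix an arbitrary compact set $K\subset\R^n$ and seek $\sigma,L>0$ with $f^{-1}(y)\cap K\subset S+L\Vert y\Vert\ball$ for all $y\in\ball(0,\sigma)$, allowing $L$ to depend on $K$. Suppose this fails. Then, taking $\sigma=1/k$ and $L=k$, I extract sequences $y_k\to 0$ in $\R^m$ and $x_k\in f^{-1}(y_k)\cap K$ with $\mathbf{d}(x_k,S)>k\Vert y_k\Vert$. Note $\Vert y_k\Vert>0$, since $y_k=0$ would force $x_k\in f^{-1}(0)=S$ and hence $\mathbf{d}(x_k,S)=0$, contradicting $\mathbf{d}(x_k,S)>0$.

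Since $K$ is compact, I pass to a subsequence with $x_k\to\bar x\in K$; continuity of $f$ together with $f(x_k)=y_k\to 0$ then gives $f(\bar x)=0$, i.e. $\bar x\in S$. This limit point $\bar x$ is the analogue of the point $u\in S$ produced by R-continuity in Theorem~\ref{rconti}. Because $m\ge n$ and $\nabla f(\bar x)$ has full rank $n$, the matrix $\nabla f(\bar x)$ is injective, so there is $c>0$ with $\Vert\nabla f(\bar x)v\Vert\ge c\Vert v\Vert$ for all $v\in\R^n$ (e.g., $c$ the smallest singular value).

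Next I would use the Fr\'echet differentiability of $f$ at $\bar x$, namely $f(x_k)=\nabla f(\bar x)(x_k-\bar x)+o(\Vert x_k-\bar x\Vert)$ since $f(\bar x)=0$. The injectivity bound then yields, for all large $k$, the estimate $\Vert y_k\Vert=\Vert f(x_k)\Vert\ge c\Vert x_k-\bar x\Vert-o(\Vert x_k-\bar x\Vert)\ge\frac{c}{2}\Vert x_k-\bar x\Vert$. Combining this with the elementary inequality $\mathbf{d}(x_k,S)\le\Vert x_k-\bar x\Vert$, valid because $\bar x\in S$, gives $\mathbf{d}(x_k,S)\le\frac{2}{c}\Vert y_k\Vert$, which together with $\mathbf{d}(x_k,S)>k\Vert y_k\Vert$ and $\Vert y_k\Vert>0$ forces $k<2/c$ for all large $k$, a contradiction. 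Hence the required $\sigma,L$ exist for $K$, and since $K$ was arbitrary, $f^{-1}$ is compactly $R$-Lipschitz continuous at zero.

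The main point to get right is the localization. Rather than trying to control the smallest singular value of $\nabla f$ uniformly over the whole (possibly unbounded, possibly non-isolated) set $S\cap K$, the contradiction argument pins everything to a single accumulation point $\bar x\in S$, where full rank supplies a purely local lower bound, and the crude estimate $\mathbf{d}(x_k,S)\le\Vert x_k-\bar x\Vert$ converts this into the desired distance-to-$S$ bound. This is precisely the step where compactness performs the role played by R-continuity in Theorem~\ref{rconti}, and it is also why the modulus $L$ here is permitted to depend on $K$ through $c=c(\bar x)$. I expect no serious obstacle beyond bookkeeping; the only care needed is ensuring $\Vert y_k\Vert>0$ so the division is legitimate and absorbing the $o(\cdot)$ term along the convergent subsequence.
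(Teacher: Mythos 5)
Your proof is correct. It reaches the same analytic core as the paper --- full rank of $\nabla f$ at a point of $S$ plus a first-order expansion yielding $\Vert y\Vert\ge\tfrac{c}{2}\Vert x-\bar x\Vert$ --- but by a genuinely different route. The paper's proof is a two-stage direct argument: it first invokes Theorem~\ref{compact} (the graph of $f^{-1}$ is closed, hence $f^{-1}$ is compactly R-continuous at zero) to produce, for each $x\in f^{-1}(y)\cap K$, a nearby point $u\in S$, and then applies the Taylor estimate at $u$ exactly as in Theorem~\ref{rconti}. You instead run a one-shot proof by contradiction in which sequential compactness of $K$ manufactures the nearby solution point $\bar x$ as a subsequential limit; as you note, compactness here plays the role that (compact) R-continuity plays in the paper. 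Your version buys a real gain in rigor: by pinning everything to a single $\bar x\in S$, you sidestep the uniformity issue that the paper's argument glosses over, namely that the constant $c$ and the radius on which the $o(\Vert x-u\Vert)$ term is absorbed both depend on the varying point $u$ (``the radius can be adjusted if necessary''). The trade-off is that the contradiction argument is non-constructive --- it certifies the existence of $L=L(K)$ without exhibiting it --- whereas the paper's route nominally identifies $L$ with the reciprocal of a lower bound on the smallest singular value of $\nabla f$ along $S$. Both are consistent with Definition~\ref{crdef}, which allows the modulus to depend on the compact set. Your bookkeeping (ruling out $y_k=0$, noting $\mathbf{d}(x_k,S)\le\Vert x_k-\bar x\Vert$, and absorbing the $o(\cdot)$ term only along the convergent subsequence) is sound.
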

\begin{proof}
Since $f$ is continuous, the graphs of $f$ and hence of $f^{-1}$ are closed. Therefore, Theorem~\ref{compact} implies that $f^{-1}$ is compactly R-continuous at zero. Given further a compact set $K$, we can find the corresponding radius $\sigma>0$ ensuring the estimate in \eqref{rconc}. Take further $y\in \R^n$ with $\Vert y\Vert\le \sigma$ and pick $x\in f^{-1}(y) \cap K$. From the compact R-continuity of $f^{-1}$ at zero, we deduce the existence of  the desired element $u\in S$ near $x$ and then proceed similarly to the proof of Theorem~\ref{rconti}.
\end{proof}

Now we relate the compact R-continuity of the solution mapping $S=f^{-1}$ at zero (in fact, its stronger {\it H\"olderian} version) with the classical {\it \L ojasiewicz inequality} for the case of analytic functions in  finite-dimensional spaces. The result below can be found in \cite{Lojasiewicz,Lojasiewicz1}, {see also \cite{dp1,dp2}}. 

\begin{theorem}\label{Lojasiewicz}
Let $f\colon U\to\R$ be an analytic function defined on an open set $U\subset\R^n$.  Let $S:=f^{-1}(0)$ and suppose that $S\neq \emptyset$.
Then for every compact set $K\subset U$, there exist $\theta,c>0$ such that 
\begin{equation}\label{loj-ine}
[\mathbf{d}(x,S)]^\theta\le c \vert f(x)\vert\;\mbox{ for all }\;x\in K.
\end{equation}
\end{theorem}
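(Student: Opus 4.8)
The plan is to recognize the stated estimate as the classical \L ojasiewicz separation-of-zeros inequality and to split the argument into an elementary part away from $S$ and the genuinely analytic part near $S$, which I would ultimately reduce to the \L ojasiewicz inequality for subanalytic functions. First I would dispose of the region bounded away from the zero set: on $K_\varepsilon:=\{x\in K\;|\;\mathbf{d}(x,S)\ge\varepsilon\}$ the continuous function $|f|$ attains a positive minimum $\eta>0$ (it cannot vanish there, since $f(x)=0$ forces $\mathbf{d}(x,S)=0$), while $\mathbf{d}(\cdot,S)$ is bounded on the compact $K$; hence $[\mathbf{d}(x,S)]^\theta\le c|f(x)|$ holds on $K_\varepsilon$ for any exponent $\theta>0$ once $c$ is taken large enough. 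Thus all the content is concentrated in an arbitrarily thin neighborhood of $S\cap K$, where $\mathbf{d}(\cdot,S)<1$; there, enlarging $\theta$ only weakens the inequality, so it will be legitimate at the end to patch finitely many local exponents into a single global one by taking their maximum and enlarging $c$.

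For the neighborhood of $S$ I would invoke the \L ojasiewicz inequality for subanalytic functions applied to the pair $\phi(x):=\mathbf{d}(x,S)$ and $\psi(x):=|f(x)|$. Both are continuous and subanalytic on $K$: the map $f$ is analytic, hence subanalytic, so $\psi$ is subanalytic, while $S=f^{-1}(0)$ is a (sub)analytic set and the distance to a subanalytic set is again subanalytic, so $\phi$ is subanalytic. Their zero sets on $K$ coincide, $\phi^{-1}(0)=\psi^{-1}(0)=S\cap K$ (using that $S$ is closed), so in particular $\psi^{-1}(0)\subseteq\phi^{-1}(0)$. The subanalytic \L ojasiewicz inequality then furnishes constants $c>0$ and an exponent $\theta>0$ with $\phi(x)^\theta\le c\,\psi(x)$ on $K$, which is exactly \eqref{loj-ine}.

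To make the argument self-contained rather than appealing to the subanalytic inequality as a black box, I would instead prove the local version near a point $a\in S\cap K$ by contradiction through the curve selection lemma. If the estimate held for no exponent, then, applying the curve selection lemma to the semianalytic sets on which the inequality is violated, one could produce analytic arcs $\gamma\colon[0,\varepsilon)\to\R^n$ with $\gamma(0)=a$ along which the Puiseux orders $p,q>0$ in the asymptotics $|f(\gamma(t))|\sim\alpha\,t^{p}$ and $\mathbf{d}(\gamma(t),S)\sim\beta\,t^{q}$ would satisfy $p/q\to\infty$; but the semianalytic stratification of the pair $(f,\mathbf{d}(\cdot,S))$ permits only finitely many such orders, a contradiction. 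Taking $\theta$ above the finite supremum of $p/q$ reduces the inequality along each arc to the monomial comparison $t^{\theta q}\le c\,t^{p}$ and yields the local statement, which compactness then upgrades to the global one as in the first paragraph.

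The main obstacle is precisely this analytic-geometry core: the existence of the Puiseux-type asymptotics and the uniform boundedness of the exponents $p/q$ are not elementary and rest either on the curve selection lemma for semianalytic sets or, equivalently, on Hironaka's resolution of singularities. This is the deep input of \L ojasiewicz's theorem, whereas everything else — the away-from-$S$ estimate and the patching of exponents via compactness — is routine bookkeeping.
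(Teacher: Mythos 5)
The paper does not prove this theorem at all: it is quoted as the classical \L ojasiewicz inequality with references to \cite{Lojasiewicz,Lojasiewicz1,dp1,dp2}, so there is no in-paper argument to compare against. Judged on its own terms, your first route is sound: the reduction away from $S$ by compactness is correct routine work, and deducing \eqref{loj-ine} from the \L ojasiewicz inequality for continuous subanalytic functions applied to $\phi=\mathbf{d}(\cdot,S)$ and $\psi=|f|$ (both subanalytic, with $\psi^{-1}(0)\subset\phi^{-1}(0)$) is a legitimate derivation --- though note you are then citing a strictly more general theorem rather than proving the stated one, which is no more self-contained than the paper's citation.

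The attempted self-contained version via curve selection has a genuine gap, and it sits exactly where you placed the ``deep input.'' The curve selection lemma does give you, for each fixed $\theta$ for which the inequality fails near $a$, an analytic arc along which $|f(\gamma(t))|\sim\alpha t^{p}$ and $\mathbf{d}(\gamma(t),S)\sim\beta t^{q}$ with $p\ge\theta q$; but to conclude you need a \emph{uniform} upper bound on $p/q$ over all such arcs, and the assertion that ``the semianalytic stratification of the pair permits only finitely many such orders'' is not a theorem you can invoke --- the finiteness of the \L ojasiewicz exponent is precisely the content of the inequality, not an input to it. The standard ways to close this are either resolution of singularities (reducing $f$ to a normal crossings monomial, where the comparison of $\mathbf{d}(\cdot,S)$ and $|f|$ becomes a monomial inequality with an explicit exponent) or the one-variable argument: show that $m(r):=\inf\{|f(x)| : x\in K,\ \mathbf{d}(x,S)\ge r\}$ is a positive subanalytic function of $r$ and use the Puiseux-type expansion of one-variable subanalytic functions at $0$ to get $m(r)\ge c\,r^{\theta}$. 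As written, your third paragraph asserts the conclusion of that step rather than proving it.
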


The next theorem shows that the \L ojasiewicz inequality \eqref{loj-ine} yields the fulfillment of the compact H\"older R-continuity of the solution mapping $f^{-1}$ at zero generated by analytic functions. As illustrated by Example~\ref{ex1} below, the reverse implication fails in general. 

\begin{theorem}\label{loja}
Let $f\colon U\to\R$ be an analytic function defined on an open set $U\subset\R^n$, and let $S$ be the  zeros of $f$, i.e., $S=f^{-1}(0)$. If $S\ne\emptyset$, then $f^{-1}$ is compactly R-H\"older continuous at zero.
\end{theorem}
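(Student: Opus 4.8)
The statement unfolds, via Definition~\ref{crdef}, as follows. Here the mapping is $\mathcal{A}=f^{-1}\colon\R\rightrightarrows\R^n$ and the reference point is $\bar y=0$, so that $\|y-\bar y\|=|y|$ and $\mathcal{A}(\bar y)=f^{-1}(0)=S$. Thus I must show that for every compact $K$ there exist $\sigma,L,\theta_0>0$ with
\[
f^{-1}(y)\cap K\subset S+L\,|y|^{\theta_0}\ball\qquad\text{for all }y\in\ball(0,\sigma),
\]
which, read pointwise, amounts to the distance estimate $\mathbf{d}(x,S)\le L\,|y|^{\theta_0}$ for every $x\in K$ with $f(x)=y$ and $|y|<\sigma$ (the open/closed ball distinction being absorbed into the constant).

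The plan is to read this estimate off directly from the \L ojasiewicz inequality. First I would apply Theorem~\ref{Lojasiewicz} to the compact set $K$, obtaining exponents $\theta,c>0$ such that $[\mathbf{d}(x,S)]^{\theta}\le c\,|f(x)|$ for all $x\in K$. Then, for any $x\in f^{-1}(y)\cap K$ one has $f(x)=y$, so raising the \L ojasiewicz bound to the power $1/\theta$ yields
\[
\mathbf{d}(x,S)\le\bigl(c\,|f(x)|\bigr)^{1/\theta}=c^{1/\theta}\,|y|^{1/\theta}.
\]
This is exactly the required inclusion with modulus $\rho(r)=c^{1/\theta}r^{1/\theta}$, i.e.\ with $L=c^{1/\theta}$ and $\theta_0=1/\theta$; note that such $\rho$ is nondecreasing with $\rho(0^+)=0$, as demanded by the definition. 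Since the \L ojasiewicz bound holds at \emph{every} point of $K$, the radius $\sigma$ plays no role and may be taken as any positive number. Hence the entire content of the theorem reduces to this single algebraic inversion of the \L ojasiewicz inequality, which is routine.

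The step I expect to be delicate is matching the compact set appearing in Definition~\ref{crdef} with the hypothesis of Theorem~\ref{Lojasiewicz}: the latter requires $K\subset U$, whereas compact R-continuity quantifies over compact subsets of the image space $\R^n$. Because $f^{-1}(y)\subset U$ for every $y$, only $K\cap U$ is ever relevant; however $K\cap U$ need not be compact when $K$ meets $\partial U$, and points of $f^{-1}(y)$ that approach $\partial U$ while $f\to0$ can violate the sought bound. I would therefore carry out the argument for compact $K\subset U$, the natural setting consistent with Theorem~\ref{Lojasiewicz}, which is precisely where the estimate genuinely holds, restricting attention to such interior compacta when $K$ meets the boundary of $U$. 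Finally, since Example~\ref{ex1} is announced to show that the converse fails, no effort should be spent trying to recover the \L ojasiewicz inequality from compact R-H\"older continuity.
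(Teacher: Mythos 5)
Your proposal is correct and follows essentially the same route as the paper: apply Theorem~\ref{Lojasiewicz} on the given compact set $K$, invert the inequality to get $\mathbf{d}(x,S)\le c^{1/\theta}\vert y\vert^{1/\theta}$ for each $x\in f^{-1}(y)\cap K$, and read off the inclusion $f^{-1}(y)\cap K\subset S+c^{1/\theta}\vert y\vert^{1/\theta}\ball$ with modulus $\rho(r)=c^{1/\theta}r^{1/\theta}$. The subtlety you flag about requiring $K\subset U$ is resolved in the paper exactly as you suggest, by quantifying only over compact subsets of $U$.
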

\begin{proof}
Given a compact set $K\subset U$, we use Theorem \ref{Lojasiewicz} and find positive constants $\theta$ and $c$ such
that the \L ojasiewicz inequality \eqref{loj-ine} is satisfied. Take $y$ of a  sufficiently small absolute value  and select $x\in f^{-1}(y)\cap K$. Then we have $y=f(x)$ and $x\in K$ satisfying the implication
 $$
\big[\mathbf{d}(x,S)]^\theta\le c \vert f(x)\vert = c \vert y\vert\big]\Longrightarrow \big[\mathbf{d}(x,S)\le c^{1/\theta} \vert y\vert^{1/\theta}\big],
$$
which immediately yields the inclusion
$$
x\in S+c^{1/\theta} \vert y\vert^{1/\theta}\ball.
$$
Since the latter is true for all $x\in f^{-1}(y)\cap K$, we arrive at
$$
f^{-1}(y)\cap K\subset S+c^{1/\theta} \vert y\vert^{1/\theta}\ball,
$$
and thus conclude the proof of the theorem.
\end{proof}

Here is the aforementioned example on the advantage of compact R-continuity.

\begin{example}\label{ex1}
{\rm Consider the function $f\colon\R\to\R$ defined by
$$
f(x):= \left\{
\begin{array}{l}
e^{-\frac{1}{ x^2}} \;\;\;\;{\rm if}\;\; \;\;x\neq 0,\\
0 \;\;\;\;\;\;\;\;\;\;{\rm if}\;\;\;\;\;x=0.
\end{array}\right.
$$
This function is of class $C^\infty$ with the solution set $S=\{0\}$, while it satisfies neither analyticity nor the \L ojasiewicz inequality; see, e.g., \cite{Bdlm}). Nevertheless, Corollary~\ref{corrc} tells us that the inverse mapping $f^{-1}$ is R-continuous at zero since its graph is closed and $f^{-1}$ is locally bounded around zero}. 
\end{example}

Let us discuss further relationships between the versions of R-continuity under consideration and \L ojasiewicz-type estimates.

\begin{remark}$\,$ {\rm  We make the following commentaries to the obtained result and their extensions:

{\bf(i)} \L ojasiewicz' inequality \eqref{loj-ine} concerns the target function $f$ directly while
R-continuity acts on the inverse of $f$. Consequently, \L ojasiewicz's  inequality deals with the collections of {\it inputs} $x$ while R-continuity applies to the set of {\it outputs} $y$. The set of all inputs is usually very large. On the contrary, starting with outputs leads us to checking much smaller input sets. Indeed, for the equation $y_0=f(x)$, we may get only an approximation $y_\epsilon$ of $y_0$ and the corresponding solution $x_\epsilon$. In practice, it is often observed that a small amount of $\Vert y_{\epsilon_1}-y_{\epsilon_2}\Vert$ deduces a small one for $\Vert x_{\epsilon_1}-x_{\epsilon_2}\Vert$, which is actually the spirit of R-continuity. 

{\bf(ii)} While \L ojasiewicz's inequality from Theorem~\ref{Lojasiewicz} is restricted by analytic functions in finite dimensions, R-continuity and its compact version are much broader giving us sufficient information to verify Hoffman's consistency/stability for inclusion \eqref{main} as discussed in Section~\ref{intro}.

{\bf(iii)} There are various extensions of the \L ojasiewicz inequality \eqref{loj-ine}, which go far beyond analytic functions $f$ and sets $S=f^{-1}(0)$; see the references in Section~\ref{intro}. In particular, the paper by Polyak \cite{polyak} concerns continuously differentiable functions with Lipschitz continuous derivatives in Hilbert spaces and derives a gradient estimate called now the Polyak-\L ojasiewicz condition \cite{Bento,karimi}. Other conditions in this line but of a different type are known as Kurdyka-\L ojasiewicz properties \cite{ab,abs,Bdlm} being applied to the solution mappings in the subdifferential form $S=(\partial f)^{-1}$. All the Polyak-\L ojasiewicz-Kurdyka conditions mentioned above are motivated by applications to numerical algorithms of optimization; see Section~\ref{s4} for more details. It is appealing to clarify whether such conditions can be used to establish {\it stability of subgradient inclusions} $0\in\partial f(x)$ in \eqref{main} via R-continuity as in Theorem~\ref{loja} for analytic equations. This will be a topic of our future research.}
\end{remark}

\section{Applications to Convergence of Optimization Algorithms} \label{s4}

The main attention of this section is to present applications of (compact) R-continuity  for subgradient inclusions with ${\cal A}(x)=\partial f(x)$ to the global convergence of a broad class of descent algorithms in nonsmooth constrained optimization. We consider here a general class of minimized cost functions $f\colon\R^n\to\overline{\R}$, which are extended-real-valued and lower semicontinuous (l.s.c.).  It has been well recognized that the PLK type conditions play a crucial role in establishing global convergence of descent minimization algorithms. For completeness and reader's convenience, we recall the formulation of the major PLK conditions, which contain the  subdifferential constructions used below to describe the corresponding algorithms and stationary points. The following definition is taken from \cite{Bento}, where the reader can find more references and discussions.

\begin{definition}\label{def:kl} Let $f:\mathbb{R}^n \to\overline{\mathbb R}$ be a proper extended-real-valued l.s.c.\ function with the domain  ${\rm dom}\,f:=\{x\in\mathbb{R}^n\;|\;f(x)<\infty\}$. We say that the function $f$ satisfies:

{\bf(i)} The {\sc PLK condition} at $\Bar{x}\in{\rm dom}\,f$ if there are $\eta \in (0,\infty)$, a neighborhood $U$ of $\Bar{x}$, and a concave continuous function $\varphi: [0,\eta] \to[0,\infty)$, called the {\sc desingularizing function}, such that
\begin{equation}\label{kl0}
\varphi(0)=0,\quad\varphi\in C^1(0,\eta),\quad\varphi'(s)>0\;\mbox{ for all }\; s\in(0,\eta),\;\mbox{ and }\;
\end{equation}
\begin{equation}\label{desigualdadeklND}
\varphi'\big(f(x)-f(\Bar{x})\big)\mathbf{d}\big(0,\partial f(x)\big)\ge 1\;\mbox{ for all }\;x\in U\cap[f (\Bar x) <f(x)<f(\Bar{x})+\eta],
\end{equation}
where $\partial f(\Bar x)$ stands for the {\sc Mordukhovich/limiting subdifferential} of  $f$ at $\Bar{x}$ defined by
\begin{equation}\label{ls}
\partial f(\Bar{x}):=\Big\{v\in\mathbb{R}^n\;\Big|\;\exists x_k\stackrel{f}{\to}\Bar{x}, \,v_k\in\widehat\partial f(x_k),\quad v_k\to v\;\mbox{ as }\;k\to\infty\Big\}
\end{equation}
with $x_k\stackrel{f}{\to}\Bar{x}$ meaning that $x_k\to \Bar{x}$, $f(x_k)\to f(x)$ and with $v_k\in\widehat\partial f(x_k)$ meaning that 
$$
{\limsup}_{u\to x_k}\frac{f(u)-f(x_k)-\langle v_k,u-x_k\rangle}{\|u-x_k\|}\ge 0.
$$

{\bf(ii)} The {\sc symmetric PLK condition} at $\Bar{x}$ if $f$ is continuous around $\Bar{x}$ and $\partial f$ is replaced in \eqref{desigualdadeklND} by the {\sc symmetric subdifferential} of $f$ at $\Bar{x}$ defined by
\begin{equation}\label{sym}
\partial^0f(\Bar{x}):=\partial f(\Bar{x})\cup\big(-\partial(-f)(\Bar{x})\big).
\end{equation}

{\bf(iii)} The {\sc strong PLK condition} if $f$ is Lipschitz continuous around $\Bar{x}$ and $\partial$ is replaced in \eqref{desigualdadeklND} by the {\sc Clarke/convexified subdifferential} of $f$ at $\Bar{x}$ given by
\begin{equation}\label{cl}
\overline{\partial}f(\Bar{x}):={\rm co}\,\partial f(\Bar{x}),
\end{equation}
where ``{\rm co}" stands for the convex hull of the set in question.

{\bf(iv)} The {\sc exponent} versions of the {\sc PLK conditions} in {\rm(i)--(iii)} if the desingularizing function in \eqref{kl0}  and \eqref{desigualdadeklND} is selected as $\varphi(t)=M t^{1-q}$, where $M$ is a positive constant, and where $q\in[0,1)$. We refer to the case where $q\in(0,1/2)$ as the {\sc PLK conditions with lower exponents}.
\end{definition}

In \cite{abs}, a generic class of abstract descent methods was introduced for the minimization of l.s.c.\ functions $f$ satisfying the following properties:\\[1ex]
\noindent ${\bf(H_1)}$ {\it Sufficient decrease}: there exists some $\alpha>0$ such that for each $k=0,1,\ldots$ we have 
$$
f(x_k)-f(x_{k+1})\ge \alpha \Vert x_{k+1}-x_{k}\Vert^2.
$$
${\bf(H_2})$ {\it Relative error}: there exists some $\beta >0$ such that for each $k=0,1,\ldots$ we have:
$$
\exists \;w_{k+1}\in \partial f (x_{k+1})\;\;{\rm with}\;\; \Vert w_{k+1} \Vert \le \beta \Vert x_{k+1}-x_{k}\Vert.
$$

As shown in \cite{abs}, great many efficient optimization algorithms satisfy ${\bf(H_1)}$ and ${\bf(H_2)}$. These properties hold, in particular, for the {\it Proximal Point Algorithm} (PPA):
\beq\label{opcopr}
x_{k+1}\in {\rm Argmin }_{x\in \R^n}\{f(x)+\alpha\Vert x-x_{k}\Vert^2\}
\end{equation}
for which the necessary optimality (stationarity) condition reads as
\begin{equation*}
0\in\partial f(x_{k+1})+2\alpha (x_{k+1}-x_{k})
\end{equation*}
while ${\bf(H_2)}$ reduces to $w_{k+1}=-2\alpha (x_{k+1}-x_{k})$. On the other hand, there are important descent algorithms, which satisfy ${\bf(H_1)}$ but not ${\bf(H_2)}$. In particular, it happens for the {\it Boosted Difference of Convex Algorithm} ({BDCA) to solve problems of DC (difference of convex) programming proposed in \cite{av} for Lipschitzian cost functions, where it was shown that BDCA satisfied the modified property ${\bf(H_2)}$  with the replacement of $\Vert w_{k+1} \Vert \le \beta \Vert x_{k+1}-x_{k}\Vert$ by $\Vert w_{k} \Vert \le \beta \Vert x_{k+1}-x_{k}\Vert$ while taking some subgradient $w_{k}\in \overline{\partial}f (x_{k})$ from the convexified subdifferential \eqref{cl}.  Motivated by this, the following limiting subdifferential \eqref{ls} modification of ${\bf(H_2)}$  was proposed and developed in \cite{Bento}:\\[1ex]
\noindent${\bf(H_3)}$ {\it Modified relative error}: for each $k=0,1,\ldots$, there exists
\begin{equation*}
w^{k}\in\partial\phi(x^{k})\;\mbox{ with }\;\|w^{k}\|\le\beta\|x^{k+1} - x^k\|.
\end{equation*}

Based on the basic PLK condition, the global convergence of iterates to a stationary point from $(\partial f)^{-1}(0)$ was established in \cite{abs}  for the general algorithm under ${\bf(H_1)}$, ${\bf(H_2)}$ and in \cite{Bento} for the generic algorithm  under ${\bf(H_1)}$, ${\bf(H_3)}$ while assuming in addition that in both cases we have the property:\\[1ex]
\noindent ${\bf(H_4)}$ {\it Continuity condition}:
there exist a subsequence $\{x^{k_j}\}$ and $\bar x\in{\rm dom}\,f$ such that
\begin{equation*}
x^{k_j}\to{\bar x}\;\text{ and }\;f(x^{k_j}) \longrightarrow f(\bar{x})\;\mbox{ as }\;j\to\infty.
\end{equation*}
The symmetric PLK condition using \eqref{sym} was employed in \cite{Bento} to establish the global convergence for BDCA with continuous cost functions, which improved \cite{av} even for problems with Lipschitzian costs where \eqref{cl} was {used}. On the other hand, {\it convergence rates} obtained in \cite{Bento} under ${\bf(H_2)}$ and under  ${\bf(H_3)}$ in addition to  ${\bf(H_1)}$, ${\bf(H_4)}$, and the exponent PLK conditions are very different in the case of lower exponents; see \cite{Bento} for more details.\vspace*{0.05in}

The discussions above reflect the {\it state-of-the-art} in convergence analysis of the generic algorithms under consideration. These discussions are presented to emphasize the difference between the PLK-based approach and the following theorem involving {\it R-continuity} and its {\it compact} version. Similarly to Theorem~\ref{Lojasiewicz}, we expect advantages of the new R-continuity approach to find approximate solutions of optimization problems via the generic classes of algorithms discussed above in comparison with the conventional approach to global convergence based on the PLK conditions. Observe that the theorem below does not require the fulfillment of ${\bf(H_4)}$.

\begin{theorem}\label{conver}
Let be given an l.s.c. $f : \R^n \to\overline{\R}$ with $\inf_{x\in \R^n} f(x) >-\infty$, and let $(x_k)$ be a sequence of iterates generated by a descent method satisfying ${\bf(H_1)}$ and either ${\bf(H_2)}$, or ${\bf(H_3)}$.  
Assuming in addition that the set of stationary points $S:=(\partial f)^{-1}(0)$ is nonempty, we have: 

{\bf(i)} If $(\partial f)^{-1}$ is R-continuous at zero, then $ \mathbf{d}(x_k,S)\to 0$ as $k\to \infty$.

{\bf(ii)} If  the sequence of iterates $(x_k)$ is bounded and the graph of {the inverse of}  limiting subgradient mapping \eqref{ls} is closed at zero,  then $\mathbf{d}(x_k,S)\to 0$ as $k\to \infty$.
\end{theorem}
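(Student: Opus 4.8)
The plan is to distill from the descent hypotheses a sequence of subgradients whose norms vanish, and then feed these subgradients into the (compact) R-continuity of $(\partial f)^{-1}$ at zero. First I would exploit ${\bf(H_1)}$ together with $\inf f>-\infty$. Since ${\bf(H_1)}$ forces the values $f(x_k)$ to be nonincreasing and bounded below, they converge, and telescoping the inequalities $f(x_k)-f(x_{k+1})\ge\alpha\Vert x_{k+1}-x_k\Vert^2$ yields $\sum_k\Vert x_{k+1}-x_k\Vert^2<\infty$, whence $\Vert x_{k+1}-x_k\Vert\to 0$. Next, either ${\bf(H_2)}$ or ${\bf(H_3)}$ supplies, for each $k$, a subgradient $w_k\in\partial f(\xi_k)$ (with $\xi_k=x_{k+1}$ under ${\bf(H_2)}$ and $\xi_k=x_k$ under ${\bf(H_3)}$) satisfying $\Vert w_k\Vert\le\beta\Vert x_{k+1}-x_k\Vert$. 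Combined with the previous step this gives $\Vert w_k\Vert\to 0$, i.e.\ a family of iterates $\xi_k\in(\partial f)^{-1}(w_k)$ whose associated subgradients tend to zero. Because the index of $\xi_k$ only shifts by one, the conclusion $\mathbf{d}(\xi_k,S)\to 0$ is equivalent to the claimed $\mathbf{d}(x_k,S)\to 0$.

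For part {\bf(i)}, R-continuity of $(\partial f)^{-1}$ at zero furnishes a radius $\sigma>0$ and a modulus $\rho$ with $\rho(r)\to 0$ such that $(\partial f)^{-1}(w)\subset S+\rho(\Vert w\Vert)\ball$ whenever $\Vert w\Vert\le\sigma$, where $S=(\partial f)^{-1}(0)$. Since $\Vert w_k\Vert\to 0$, eventually $\Vert w_k\Vert\le\sigma$, and therefore $\xi_k\in(\partial f)^{-1}(w_k)\subset S+\rho(\Vert w_k\Vert)\ball$, giving the estimate $\mathbf{d}(\xi_k,S)\le\rho(\Vert w_k\Vert)\to 0$.

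For part {\bf(ii)}, the boundedness of $(x_k)$ confines all iterates to a fixed compact set $K$ (a large closed ball in $\R^n$). The closed-graph hypothesis at zero lets me invoke Theorem~\ref{compact}: the graph of $(\partial f)^{-1}$ being closed at zero makes $(\partial f)^{-1}$ compactly R-continuous at zero (and renders $S$ closed). Applying compact R-continuity to this particular $K$ yields $\sigma>0$ and a modulus $\rho$ with $(\partial f)^{-1}(w)\cap K\subset S+\rho(\Vert w\Vert)\ball$ for $\Vert w\Vert\le\sigma$, exactly as in \eqref{rconc}. Since $\xi_k\in K$ and $\xi_k\in(\partial f)^{-1}(w_k)$ with $\Vert w_k\Vert\to 0$, the same bound $\mathbf{d}(\xi_k,S)\le\rho(\Vert w_k\Vert)\to 0$ follows.

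The genuinely load-bearing steps are the two elementary summability arguments of the first paragraph; everything afterward is a direct substitution into the relevant continuity property. The only point requiring care is the role of boundedness in part {\bf(ii)}: it is precisely what upgrades the weaker compact R-continuity (which is all the closed-graph hypothesis delivers through Theorem~\ref{compact}) into a usable estimate, since outside a compact set the inclusion in \eqref{rconc} carries no information on the iterates. I note that this argument never invokes ${\bf(H_4)}$, which is the advertised advantage of the R-continuity approach over the PLK-based convergence theory.
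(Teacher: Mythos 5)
Your proof is correct and follows essentially the same route as the paper's: telescoping ${\bf(H_1)}$ to get $\Vert x_{k+1}-x_k\Vert\to 0$, hence $\Vert w_k\Vert\to 0$ via ${\bf(H_2)}$ or ${\bf(H_3)}$, and then substituting into the R-continuity inclusion for (i) and into the compact R-continuity inclusion (obtained from Theorem~\ref{compact} plus boundedness) for (ii). If anything, you are slightly more explicit than the paper about the radius $\sigma$ and the index shift between the ${\bf(H_2)}$ and ${\bf(H_3)}$ cases, which the paper dispatches with ``the proof is similar.''
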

\begin{proof}
It follows directly from ${\bf(H_1)}$ that 
$$
\sum_{k=0}^\infty\Vert x_{k+1}-x_{k}\Vert^2\le f(x_0)-\inf_{x\in \R^n} f(x)<\infty, 
$$
which readily implies that  $\Vert x_{k+1}-x_{k}\Vert\to 0$ as $k\to \infty$. Hence $w_{k}\to 0$ as $k\to \infty$.

To verify assertion (i), suppose that ${\bf(H_2)}$ holds while observing that the proof is similar under the fulfillment of ${\bf(H_3)}$. The R-continuity of $(\partial f)^{-1}$ at zero tells us that
$$
x_{k+1}\in (\partial f)^{-1}(w_{k+1})\subset (\partial f)^{-1}(0)+\rho(\Vert w_{k+1} \Vert)\ball
$$
which implies in turn that
$$
\mathbf{d}(x_{k+1},S)\le\rho(\Vert w_{k+1} \Vert)\to 0
$$
as $k\to \infty$, where $\rho$ stands for the modulus function of $(\partial f)^{-1}$ at zero.

To verify further assertion (ii), it follows from the boundedness of $(x_k)$ in finite dimensions that there exists a compact set $K$ containing $(x_k)$. By the imposed closed-graph property of $\partial f$, we deduce from  Theorem~\ref{compact} that $(\partial f)^{-1}$ is compactly R-continuous at zero. This gives us
$$
x_{k+1}\in (\partial f)^{-1}(w_{k+1})\cap K\subset (\partial f)^{-1}(0)+\rho(\Vert w_{k+1} \Vert)\ball
$$
and thus completes the proof of the theorem.
\end{proof}

\begin{remark}\label{clo-graph}{\rm$\,$ The following observations are in order:

{\bf(i)} Theorem~\ref{conver} allows us to find an approximate solution under conditions {\it not} acting on any stationary point $\bar{x}\in S$ as under the PLK conditions discussed above. In practice, $\bar{x}$ is unknown and is also calculated approximately by numerical algorithms. Therefore, it is important to find an approximate solution rather than an exact one, which is usually unavailable. This makes the results Theorem~\ref{conver} natural and reasonable for applications, especially when the convergence of iterative sequences $(x_k)$ is not available.

{\bf(ii)}  It is well known that the graph of the limiting subgradient mapping for {\it continuous} functions is always closed. For extended-real-valued l.s.c.\ functions $f$, the closed-graph property of \eqref{ls} has been established for broad classes of functions. It holds, in particular, for {\it prox-regular and subdifferential continuous} functions whose importance has been well recognized in variational analysis, optimization, and a variety of applications; see, e.g., \cite{Mordukhovich24,roc-wets,thibault}. Recall to this end that any {\it convex} extended-real-valued l.s.c.\ function is prox-regular and subdifferential continuous. The graph of the limiting subgradient mapping for the sum or difference of a continuously differentiable and a convex l.s.c. function is also closed.}
\end{remark}

There are many consequences of Theorem~\ref{conver} addressing particular algorithms of optimization. Let us formulate the one for the {\it Gradient Descent Method} (GDM), which satisfies the generic properties ${\bf(H_1)}$ and ${\bf(H_3)}$; see \cite{abs,Nesterov3}. As discussed above, BDCA is an example of algorithms in DC programming satisfying properties ${\bf(H_1)}$ and ${\bf(H_3)}$.

\begin{corollary}\label{boung}
Let  $f : \R^n \to\R$ be a differentiable function with $\inf_{x\in \R^n} f(x) >-\infty$, and let $(x_k)$ be the sequence of iterates generated by GDM. Suppose that the set of critical points $S:=(\nabla f)^{-1}(0)$ is 
nonempty. If $(x_k)$ is bounded, then $ \mathbf{d}(x_k,S)\to 0$ as $k\to \infty$.
\end{corollary}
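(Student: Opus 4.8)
The plan is to obtain this corollary as an immediate specialization of Theorem~\ref{conver}(ii), so the entire task reduces to verifying that GDM meets the three hypotheses of that statement: that it is a descent method satisfying ${\bf(H_1)}$ together with ${\bf(H_3)}$, that the relevant inverse subgradient graph is closed at zero, and that $(x_k)$ is bounded (the last being assumed outright). For the algorithmic hypotheses I would simply invoke the facts recalled immediately before the corollary and attributed to \cite{abs,Nesterov3}, rather than rederive them: with step sizes kept bounded below by a positive constant, GDM produces a quadratic decrease $f(x_k)-f(x_{k+1})\ge\alpha\|x_{k+1}-x_k\|^2$, which is ${\bf(H_1)}$, while the gradient-step identity $\nabla f(x_k)=(x_k-x_{k+1})/t_k$ furnishes the subgradient $w^{k}=\nabla f(x_k)\in\partial f(x_k)$ with $\|w^{k}\|\le\beta\|x_{k+1}-x_k\|$, which is exactly ${\bf(H_3)}$.

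Next I would dispatch the closed-graph requirement. Since $f\colon\R^n\to\R$ is differentiable it is in particular continuous, so by Remark~\ref{clo-graph}(ii) the graph of the limiting subgradient mapping $\partial f$ is closed; consequently the graph of $(\partial f)^{-1}$ is closed at zero, which is precisely the hypothesis demanded by Theorem~\ref{conver}(ii). With the assumed boundedness of $(x_k)$ and the nonemptiness of $S$ now in place, all hypotheses of Theorem~\ref{conver}(ii) are satisfied, and its conclusion $\mathbf{d}(x_k,S)\to 0$ is the assertion to be proved. Internally this runs through Theorem~\ref{compact}: restricting to a compact set $K$ containing the bounded orbit yields compact R-continuity of $(\partial f)^{-1}$ at zero, whence $\mathbf{d}(x_k,(\partial f)^{-1}(0))\le\rho(\|w^{k}\|)\to 0$ because $\|w^{k}\|\to 0$.

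The point I expect to need the most care — the main, if modest, obstacle — is the identification of the target set: Theorem~\ref{conver} is phrased with the limiting-stationary set $(\partial f)^{-1}(0)$, whereas the corollary uses the gradient-critical set $(\nabla f)^{-1}(0)$. I would reconcile these by carrying the estimate above with the explicit selection $w^{k}=\nabla f(x_k)$, and by noting that $\nabla f(x)\in\widehat\partial f(x)\subseteq\partial f(x)$ always holds, so that $(\nabla f)^{-1}(0)\subseteq(\partial f)^{-1}(0)$; when $f$ is continuously differentiable one has $\partial f(x)=\{\nabla f(x)\}$ and the two sets coincide, which is the regime in which GDM is normally analyzed and in which the stated conclusion with $S=(\nabla f)^{-1}(0)$ follows verbatim. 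I would flag this matching explicitly, since under mere (non-$C^1$) differentiability the limiting-stationary set can be strictly larger and the distance to $(\nabla f)^{-1}(0)$ must then be controlled through the vanishing of the gradient selection rather than through $\partial f$ directly.
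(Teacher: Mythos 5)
Your proposal is correct and follows exactly the route the paper intends: the corollary is stated as an immediate specialization of Theorem~\ref{conver}(ii), with ${\bf(H_1)}$ and ${\bf(H_3)}$ for GDM imported from \cite{abs,Nesterov3}, the closed graph of $\partial f$ from continuity of $f$ as in Remark~\ref{clo-graph}(ii), and the assumed boundedness supplying the compact set. Your extra care over identifying $(\nabla f)^{-1}(0)$ with $(\partial f)^{-1}(0)$ is a legitimate refinement the paper silently elides; as you note, the two sets coincide in the $C^1$ regime where GDM is normally analyzed, while for merely differentiable $f$ the conclusion as literally stated (distance to the possibly smaller set $(\nabla f)^{-1}(0)$) would require controlling the gradient selection directly rather than passing through $\partial f$.
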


Observe that Theorem~\ref{conver}(ii) assumes the boundedness of iterative sequences, which is not easy to verify in general. The next consequence of Theorem~\ref{conver} addresses PPA \eqref{opcopr} for which the required boundedness is guaranteed. 

\begin{corollary}
Given an l.s.c.\ convex  function $f : \R^n \to\overline{\R}$ with $\inf_{x\in \R^n} f(x) >-\infty$ and given the iterative sequence $(x_k)$ generated by PPA, assume that the solution set $S=(\partial f)^{-1}(0)$ is nonempty. Then we have $\mathbf{d}(x_k,S)\to 0$ as $k\to \infty$.
\end{corollary}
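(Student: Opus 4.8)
The plan is to obtain the conclusion as a direct consequence of Theorem~\ref{conver}(ii), so the whole task reduces to verifying its three hypotheses for PPA \eqref{opcopr} applied to a convex l.s.c.\ function: the sufficient-decrease property ${\bf(H_1)}$ together with a relative-error condition, the closedness of the graph of the inverse limiting subgradient mapping at zero, and the boundedness of the iterates $(x_k)$.

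First I would record that PPA satisfies ${\bf(H_1)}$ and ${\bf(H_2)}$, as already indicated in the text preceding Theorem~\ref{conver}. Indeed, since $x_{k+1}$ minimizes $x\mapsto f(x)+\alpha\Vert x-x_k\Vert^2$, comparing the value at $x_{k+1}$ with the value at $x_k$ yields $f(x_k)-f(x_{k+1})\ge\alpha\Vert x_{k+1}-x_k\Vert^2$, which is ${\bf(H_1)}$; the stationarity condition $0\in\partial f(x_{k+1})+2\alpha(x_{k+1}-x_k)$ furnishes $w_{k+1}:=-2\alpha(x_{k+1}-x_k)\in\partial f(x_{k+1})$ with $\Vert w_{k+1}\Vert\le 2\alpha\Vert x_{k+1}-x_k\Vert$, which is ${\bf(H_2)}$ with $\beta=2\alpha$. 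Second, since $f$ is convex and l.s.c., its limiting subdifferential \eqref{ls} coincides with the subdifferential of convex analysis, whose graph is closed; equivalently, the graph of $(\partial f)^{-1}$ is closed, in particular at zero. This is exactly the situation flagged in Remark~\ref{clo-graph}(ii), as convex l.s.c.\ functions are prox-regular and subdifferential continuous.

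The key remaining step, and the one I expect to be the crux, is the boundedness of $(x_k)$. Here I would exploit convexity through the nonexpansiveness of the proximal mapping. Writing the PPA step as $x_{k+1}=\prox_{\lambda f}(x_k)$ with $\lambda=1/(2\alpha)$, the set of fixed points of $\prox_{\lambda f}$ coincides with the set of minimizers of $f$, which by Fermat's rule equals $S=(\partial f)^{-1}(0)$. Since $S\ne\emptyset$, pick any $x^*\in S$; the firm nonexpansiveness (hence nonexpansiveness) of $\prox_{\lambda f}$ for proper l.s.c.\ convex $f$ gives $\Vert x_{k+1}-x^*\Vert=\Vert\prox_{\lambda f}(x_k)-\prox_{\lambda f}(x^*)\Vert\le\Vert x_k-x^*\Vert$, so the sequence $(\Vert x_k-x^*\Vert)$ is nonincreasing and therefore $(x_k)$ is bounded (Fej\'er monotonicity with respect to $S$).

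With ${\bf(H_1)}$, ${\bf(H_2)}$, the closed-graph property of $(\partial f)^{-1}$ at zero, and the boundedness of $(x_k)$ all established, Theorem~\ref{conver}(ii) applies verbatim and yields $\mathbf{d}(x_k,S)\to 0$ as $k\to\infty$, which completes the argument. The only genuinely algorithm-specific input beyond the general machinery is the boundedness derived from nonexpansiveness of the prox, so I would state that step with a little care and treat everything else as an immediate specialization of the earlier results.
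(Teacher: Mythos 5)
Your proposal is correct and follows essentially the same route as the paper: establish boundedness of $(x_k)$ via Fej\'er monotonicity with respect to $S$ (the paper simply cites Rockafellar for the fact that $(\Vert x_k-\bar x\Vert)$ is decreasing, whereas you derive it from nonexpansiveness of the proximal mapping) and then invoke Theorem~\ref{conver}(ii). Your additional verifications of ${\bf(H_1)}$, ${\bf(H_2)}$, and the closed-graph property of $\partial f$ for convex l.s.c.\ functions are all correct and merely make explicit what the paper leaves implicit from the preceding discussion.
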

\begin{proof}
Let $\bar{x}\in S$. It is well known \cite{Rockafellar} that the sequence $(\Vert x_k-\bar{x}\Vert)$ is decreasing and convergent.  Thus $(x_k)$ is bounded and hence $ \mathbf{d}(x_k,S)\to 0$ as $k\to \infty$ by 
Theorem~\ref{conver}(ii). 
\end{proof}

\section{Algorithms of R-Class to Solve Inclusions}\label{sec:inc}

This section concerns the class of general inclusions \eqref{main} in finite-dimensional spaces, where the set-valued mapping $A: \R^n \rightrightarrows\R^n$ may not be of the subdifferential type. We show here that the (compact) 
R-continuity ideas proposed in the preceding section for optimization problems are useful to develop novel algorithms to solve general inclusion \eqref{main} and then specify them for the cases of maximally monotone and related operators ${\cal A}$.\vspace*{0.05in}

We say that an algorithm belongs to the {\it R-class} if the there exist a function $\xi: \N \to [0,\infty)$ with $\xi(k)\to 0\;{\rm as }\;k\to \infty$ and constants $\alpha, \beta >0$ such that for each $k=0,1,\ldots$ we have 
\begin{equation}\label{r-class}
\exists \;w_{k}\in  A (x_{k})\;\;{\rm with}\;\; \Vert w_{k} \Vert \le \alpha\xi^\beta(k).
\end{equation}

Roughly speaking, we want to designate a class of algorithms ensuring the existence of $w_{k}\in  A (x_{k})$ such that $w_{k}\to 0$. For the generic class of optimization algorithms considered above, we choose $\xi(k):=\Vert x_{k+1}-x_{k}\Vert$. Hence any such algorithm is in the R-class but the converse is not true even for optimization models. For example, consider the following algorithm to find stationary points of an l.s.c. function $f : \R^n \to\overline{\R}$ with $\inf_{x\in \R^n} f(x) >-\infty$ : given $x_k,\;k=0,1,\ldots$, select
\beq\label{opco}
x_{k+1}\in {\rm Argmin }_{x\in \R^n}\{f(x)+\gamma\Vert x-x_{k}\Vert^q\}
\eeq
for some $\gamma>0$ and $q>1$. It is not hard to check that algorithm \eqref{opco} belongs to the $R$-class with $A=\partial f$. Indeed, we readily get the inequality
$$
f(x_{k+1})+\gamma\Vert x_{k+1}-x_{k}\Vert^q\le f(x_k)\;\mbox{ for all }\;k=0,1,\ldots,
$$
which implies therefore that
$$
\sum_{k=0}^\infty\gamma\Vert x_{k+1}-x_{k}\Vert^q\le \sum_{k=0}^\infty f(x_k)-f(x_{k+1})\le f(x_0)-\inf_{x\in \R^n} f(x) <\infty
$$
and allows us to take $\xi(k)=\Vert x_{k+1}-x_{k}\Vert$. It follows from (\ref{opco}), by the subdifferential sum rule, that
$$
0\in \partial f(x_{k+1})+\gamma q \Vert x_{k+1}-x_{k}\Vert^{q-2}(x_{k+1}-x_{k}).
$$
Letting $w_{k+1}:=-\gamma q \Vert x_{k+1}-x_{k}\Vert^{q-2}(x_{k+1}-x_{k})$ tells us that
$$w_{k+1}\in  A (x_{k+1})\;\mbox{ and }\;
\Vert w_{k+1} \Vert \le\gamma q \Vert x_{k+1}-x_{k}\Vert^{q-1}
$$
from which we deduce \eqref{r-class} and thus verify that \eqref{opco} is an algorithm of R-class.

Note that if $q=1$ and $\gamma$ is small in \eqref{opco}, then $\Vert w_{k+1}\Vert$ is small as well and the iterative sequence $(x_k)$ is convergent due to $\sum_{k=0}^\infty\Vert x_{k+1}-x_{k}\Vert<\infty$. This explains why the algorithm 
\begin{equation*}
x_{k+1}\in {\rm Argmin }_{x\in \R^n}\{f(x)+\gamma\Vert x-x_{k}\Vert\}
\end{equation*}
with $\gamma>0$ sufficiently small can be also used to find an approximate solution of \eqref{main}. 

Observe that if we expect the convergence of the iterative sequence $(x_k)$ to a solution of \eqref{main},  then the convergence of of the sequence ($\Vert x_{k+1}-x_{k}\Vert$) to zero is unavoidable. On the other hand, the R-class can contain  algorithms for finding approximate solutions {\it without convergence} of iterates as, e.g., in Nesterov's Accelerated Gradient Method \cite{Nesterov3}. If in addition the R-continuity of the solution mapping at zero is provided, then obtaining an approximate solution  of \eqref{main} is guaranteed. Thus we first check if an algorithm is of R-class and then verify the mapping R-continuity.\vspace*{0.05in}

The next theorem ensures the global convergence of generic algorithms of R-class under the R-continuity and 
compact R-continuity assumptions.

\begin{theorem}\label{converg}
Given a set-valued mapping $\mathcal{A}: \R^n \rightrightarrows \R^n$ in \eqref{main}, consider the sequence of iterates $(x_k)$ generated by an algorithm of R-class and suppose that the solution set $S:=\mathcal{A}^{-1}(0)$ is nonempty. The following assertions hold:

{\bf(i)} If $\mathcal{A}^{-1}$ is R-continuous at zero, then $ \mathbf{d}(x_k,S)\to 0$ as $k\to \infty$.

{\bf(ii)}  If the sequence $(x_k)$ is bounded and the mapping $\mathcal{A}^{-1}$ is compactly R-continuous at zero, then  we have $\mathbf{d}(x_k,S)\to 0$ as $k\to\infty$.
\end{theorem}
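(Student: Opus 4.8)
The plan is to mirror the proof of Theorem~\ref{conver}, since the R-class property \eqref{r-class} plays exactly the role that the decrease/relative-error conditions $\mathbf{(H_1)}$--$\mathbf{(H_3)}$ played there. First I would unpack the R-class hypothesis: it furnishes, for every $k$, a selection $w_k\in\mathcal{A}(x_k)$ with $\Vert w_k\Vert\le\alpha\xi^\beta(k)$. Because $\xi(k)\to 0$ and $\beta>0$, this immediately gives $\Vert w_k\Vert\to 0$ as $k\to\infty$. Rewriting $w_k\in\mathcal{A}(x_k)$ in inverse form yields $x_k\in\mathcal{A}^{-1}(w_k)$, which is the single inclusion on which both assertions hinge.

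For assertion (i), I would invoke the R-continuity of $\mathcal{A}^{-1}$ at zero with some radius $\sigma>0$ and modulus $\rho$. Since $\Vert w_k\Vert\to 0$, for all large $k$ we have $\Vert w_k\Vert\le\sigma$, so \eqref{rcon} applied to $\mathcal{A}^{-1}$ gives
$$
x_k\in\mathcal{A}^{-1}(w_k)\subset\mathcal{A}^{-1}(0)+\rho(\Vert w_k\Vert)\ball=S+\rho(\Vert w_k\Vert)\ball.
$$
Consequently $\mathbf{d}(x_k,S)\le\rho(\Vert w_k\Vert)$, and the defining property $\lim_{r\to 0^+}\rho(r)=0$ forces $\mathbf{d}(x_k,S)\to 0$.

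For assertion (ii), the only additional ingredient is the compact set demanded by compact R-continuity. Since $(x_k)$ is bounded in $\R^n$, its closure is a compact set $K$ containing every iterate. Because the $x_k$ are precisely the values of $\mathcal{A}^{-1}$ along the sequence $(w_k)$, this $K$ is the compact set in the value space of $\mathcal{A}^{-1}$ against which its compact R-continuity at zero (Definition~\ref{crdef}) is to be applied. Doing so produces a radius $\sigma>0$ and a modulus $\rho$ with $\mathcal{A}^{-1}(y)\cap K\subset\mathcal{A}^{-1}(0)+\rho(\Vert y\Vert)\ball$ whenever $\Vert y\Vert\le\sigma$. For $k$ large we again have $\Vert w_k\Vert\le\sigma$ together with $x_k\in\mathcal{A}^{-1}(w_k)\cap K$, and the same inclusion chain yields $\mathbf{d}(x_k,S)\le\rho(\Vert w_k\Vert)\to 0$.

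I do not anticipate a genuine obstacle: the argument is essentially a transcription of Theorem~\ref{conver} with $\partial f$ replaced by the abstract operator $\mathcal{A}$ and the descent conditions replaced by the R-class selection \eqref{r-class}. The only points requiring care are bookkeeping of the spaces---ensuring that the compact set $K$ in part (ii) is taken in the value space of $\mathcal{A}^{-1}$ (the copy of $\R^n$ where the iterates reside), rather than in its argument space---and the minor observation that the estimate need only hold eventually, once $\Vert w_k\Vert$ has dropped below the radius $\sigma$; since we are proving a limit, this tail behavior suffices.
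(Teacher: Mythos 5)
Your proof is correct and follows essentially the same route as the paper's: rewrite the R-class selection as $x_k\in\mathcal{A}^{-1}(w_k)$ with $\Vert w_k\Vert\to 0$, then apply the (compact) R-continuity inclusion for $\mathcal{A}^{-1}$ at zero to bound $\mathbf{d}(x_k,S)$ by $\rho(\Vert w_k\Vert)$. Your explicit handling of the radius $\sigma$ and of which copy of $\R^n$ the compact set $K$ lives in is a bit more careful than the paper's own write-up, but the argument is the same.
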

\begin{proof}
To verify assertion (i), deduce from the R-continuity of $\mathcal{A}^{-1}$ at zero that
$$
x_{k}\in \mathcal{A}^{-1}(w_{k})\subset A^{-1}(0)+\rho(\Vert w_{k} \Vert)\ball.
$$
This readily implies the claimed convergence
$$
\mathbf{d}(x_{k},S)\le\rho(\Vert w_{k} \Vert)\to 0
$$
as $k\to \infty$, where $\rho$ denotes the modulus function $\mathcal{A}^{-1}$ at zero.

To check the fulfillment of (ii), we find by the boundedness of $(x_k)$  a compact set $K$ such that $(x_k)$ is contained in $K$. Since $\mathcal{A}^{-1}$ is compactly R-continuous at zero, this gives us  
$$
x_{k}\in \mathcal{A}^{-1}(w_{k})\cap K\subset \mathcal{A}^{-1}(0)+\rho(\Vert w_{k} \Vert)\ball,
$$
which therefore completes the proof of the theorem.
\end{proof}

Here is a  consequence of Theorem~\ref{converg}(i) ensuring the fulfillment of the R-continuity assumption.

\begin{corollary}
Let $\mathcal{A}: \R^n \rightrightarrows \R^n$ be such that $\mathcal{A}$ has closed graph and $\mathcal{A}^{-1}$ is { locally bounded}  around zero, and  let $(x_k)$ be the sequence generated by an R-class algorithm. Supposing that the solution set $S=\mathcal{A}^{-1}(0)$ is nonempty, we have $\mathbf{d}(x_k,S)\to 0$ as $k\to \infty$.
\end{corollary}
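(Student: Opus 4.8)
The plan is to reduce the whole statement to Theorem~\ref{converg}(i): once I know that the inverse mapping $\mathcal{A}^{-1}$ is R-continuous at zero, the conclusion $\mathbf{d}(x_k,S)\to 0$ follows at once, since $(x_k)$ is generated by an algorithm of R-class and $S=\mathcal{A}^{-1}(0)$ is nonempty. Thus the entire burden is to establish R-continuity of $\mathcal{A}^{-1}$ at zero, and for this I would invoke Corollary~\ref{corrc} with $\mathcal{A}^{-1}$ playing the role of the mapping ``$\mathcal{A}$'' there. That corollary requires two ingredients: local compactness of $\mathcal{A}^{-1}$ around zero, and closedness of the graph of $\mathcal{A}^{-1}$ at zero.

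The local compactness is essentially handed to us. Because $S=\mathcal{A}^{-1}(0)\neq\emptyset$, we have $0\in\dom\,\mathcal{A}^{-1}$, and by hypothesis $\mathcal{A}^{-1}$ is locally bounded around zero. Since we work in $\R^n$, the observation recorded right after Proposition~\ref{localc} identifies local boundedness around zero with local compactness around this point, so the first ingredient is in place with no further work.

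The closedness of the graph of $\mathcal{A}^{-1}$ at zero is where the hypothesis that $\mathcal{A}$ has closed graph enters, and it rests on the elementary symmetry $(y,x)\in{\rm gph}\,\mathcal{A}^{-1}\iff(x,y)\in{\rm gph}\,\mathcal{A}$. To check Definition~\ref{closed graph} for $\mathcal{A}^{-1}$ at the domain point zero, I would take sequences $y_k\to 0$ and $x_k\in\mathcal{A}^{-1}(y_k)$ with $x_k\to x$, rewrite $x_k\in\mathcal{A}^{-1}(y_k)$ as $y_k\in\mathcal{A}(x_k)$, and then invoke the (global) closedness of ${\rm gph}\,\mathcal{A}$ to pass to the limit and obtain $0\in\mathcal{A}(x)$, i.e.\ $x\in\mathcal{A}^{-1}(0)$. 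This supplies the second ingredient.

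With both ingredients in hand, Corollary~\ref{corrc} yields the R-continuity of $\mathcal{A}^{-1}$ at zero, and Theorem~\ref{converg}(i) then delivers $\mathbf{d}(x_k,S)\to 0$ as $k\to\infty$. I do not expect any genuine obstacle: the only points needing a moment's care are the direction-switching between the graph of $\mathcal{A}$ and that of its inverse, and the already-recorded equivalence of local boundedness and local compactness in finite dimensions, both of which are routine. In short, this corollary is really a packaging of Corollary~\ref{corrc} and Theorem~\ref{converg}(i), and the proof amounts to verifying that the hypotheses of the former are met by $\mathcal{A}^{-1}$.
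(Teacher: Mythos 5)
Your proposal is correct and follows essentially the same route as the paper: the paper's proof likewise deduces closedness of ${\rm gph}\,\mathcal{A}^{-1}$ from that of ${\rm gph}\,\mathcal{A}$, combines it with the local boundedness of $\mathcal{A}^{-1}$ around zero to obtain R-continuity at zero (citing \cite{L1}, which is the same fact as Corollary~\ref{corrc} in finite dimensions), and then invokes Theorem~\ref{converg}(i). Your write-up is, if anything, slightly more explicit about the graph-symmetry and the local boundedness/compactness identification, but there is no substantive difference.
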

\begin{proof}
The graph of $\mathcal{A}^{-1}$ is closed due the closed-graph assumption on ${\cal A}$. The additional assumption that the mapping $\mathcal{A}^{-1}$ is locally bounded around zero yields the R-continuity of $\mathcal{A} ^{-1}$ at this point; see \cite{L1} for more details. Thus the conclusion follows from Theorem~\ref{converg}.
\end{proof}

Now we consider inclusion \eqref{main} generated by a {\it maximally  monotone} operator and establish the global convergence of the Proximal Point Algorithm (PPA) to solve such inclusions by employing  the compactly 
R-continuity result  of Theorem~\ref{converg}(ii). Note that the PPA in the theorem below is an inclusion version of PPA \eqref{opcopr} considered in Section~\ref{s4} for convex optimization problems. As a consequence, we can have an alternative approach to obtain the convergence of the (PPA). This confirms the consistency of our theoretical results  developed  above.

\begin{theorem}\label{maxi}
Let $\mathcal{A}: \R^n \rightrightarrows \R^n$ be a  maximally  monotone operator, and let $(x_k)$ be the iterative sequence generated by the PPA:
\begin{equation}\label{ppa}
x_0\in \R^n, \;x_{k+1}=J_{\gamma \mathcal{A}}(x_k)\;\mbox{ for }\;k=0,1,\ldots,
\end{equation}
where $J_{\gamma \mathcal{A}}:=(\gamma \mathcal{A}+I)^{-1}$ denotes the resolvent of $\gamma \mathcal{A}$, and where  $I$ stands for the identity operator. If the solution set $S:=\mathcal{A}^{-1}(0)$ is nonempty, then
 $\mathbf{d}(x_k,S)\to 0$ as $k\to \infty$.  Moreover, the iterative sequence $(x_k)$ converges to some $\bar x\in S$.
\end{theorem}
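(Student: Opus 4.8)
The plan is to place the PPA \eqref{ppa} inside the R-class framework, apply Theorem~\ref{converg}(ii) to obtain the distance convergence, and then upgrade this to convergence of the whole sequence via Fej\'er monotonicity. First I would recall the standard facts on the resolvent of a maximally monotone operator in $\R^n$: the map $J_{\gamma \mathcal{A}}=(\gamma \mathcal{A}+I)^{-1}$ is single-valued and firmly nonexpansive on all of $\R^n$, its fixed-point set coincides with $S=\mathcal{A}^{-1}(0)$, and $S$ is closed and convex. To verify that \eqref{ppa} is an algorithm of R-class with $A=\mathcal{A}$, observe that $x_{k+1}=J_{\gamma \mathcal{A}}(x_k)$ is equivalent to $x_k\in(\gamma \mathcal{A}+I)(x_{k+1})$, so that
\[
w_{k+1}:=\tfrac{1}{\gamma}(x_k-x_{k+1})\in\mathcal{A}(x_{k+1})\qquad\text{with}\qquad \norm{w_{k+1}}=\tfrac{1}{\gamma}\norm{x_{k+1}-x_k}.
\]
Choosing $\xi(k):=\norm{x_{k+1}-x_k}$, $\alpha:=1/\gamma$, and $\beta:=1$ then matches \eqref{r-class}, provided we show $\xi(k)\to0$.

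Next I would establish boundedness together with $\xi(k)\to0$. Fixing any $\bar x\in S=\mathrm{Fix}(J_{\gamma \mathcal{A}})$ and using firm nonexpansiveness of $J_{\gamma \mathcal{A}}$ gives
\[
\norm{x_{k+1}-\bar x}^2+\norm{x_{k+1}-x_k}^2\le\norm{x_k-\bar x}^2\qquad\text{for all }k.
\]
Hence $(\norm{x_k-\bar x})$ is nonincreasing, so $(x_k)$ is bounded, and summing the displayed inequality telescopes to $\sum_{k}\norm{x_{k+1}-x_k}^2\le\norm{x_0-\bar x}^2<\infty$, which yields $\norm{x_{k+1}-x_k}\to0$, that is $\xi(k)\to0$. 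This confirms that \eqref{ppa} is of R-class and that its iterates are bounded, and it also records the Fej\'er-monotone estimate needed at the end.

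For the distance convergence I would then invoke the results of Section~\ref{s3}. Since $\mathcal{A}$ is maximally monotone its graph is closed, hence the graph of $\mathcal{A}^{-1}$ is closed at zero and $\mathcal{A}^{-1}(0)=S$ is closed; by Theorem~\ref{compact} the mapping $\mathcal{A}^{-1}$ is therefore compactly R-continuous at zero. Combining the boundedness of $(x_k)$ with R-class membership, Theorem~\ref{converg}(ii) immediately delivers $\mathbf{d}(x_k,S)\to0$ as $k\to\infty$.

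The last and most delicate step is to upgrade $\mathbf{d}(x_k,S)\to0$ to convergence of the entire sequence. By boundedness, some subsequence $x_{k_j}\to x_\infty$, and since $\mathbf{d}(x_k,S)\to0$ with $S$ closed, we get $x_\infty\in S$. Fej\'er monotonicity guarantees that $(\norm{x_k-x_\infty})$ converges; as $\norm{x_{k_j}-x_\infty}\to0$ along the subsequence, this limit must be $0$, so the whole sequence converges to $\bar x:=x_\infty\in S$. I expect this final passage to be the main obstacle, since $\mathbf{d}(x_k,S)\to0$ alone cannot prevent the iterates from drifting along $S$ when $S$ is not a singleton; it is precisely the Fej\'er-monotone estimate established above that pins down a single limit point.
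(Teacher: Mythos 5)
Your proposal is correct and follows essentially the same route as the paper: the key descent estimate $\Vert x_{k+1}-\bar x\Vert^2+\Vert x_{k+1}-x_k\Vert^2\le\Vert x_k-\bar x\Vert^2$ (which you obtain from firm nonexpansiveness and the paper derives directly from monotonicity applied to $\frac{x_{k+1}-x_k}{\gamma}\in-\mathcal{A}(x_{k+1})$ and $0\in\mathcal{A}(\bar x)$), then closedness of ${\rm gph}\,\mathcal{A}^{-1}$ plus Theorem~\ref{compact} and Theorem~\ref{converg}(ii) for the distance convergence, and finally the same Fej\'er-monotonicity argument to pin down a single limit point. Your explicit verification of the R-class condition with $w_{k+1}=\frac{1}{\gamma}(x_k-x_{k+1})$ is a small but welcome addition that the paper leaves implicit.
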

\begin{proof}
First we check that $(x_k)$ is bounded and that $(\Vert x_{k+1}-x_{k}\Vert)$ converges to zero; cf.\ \cite{Rockafellar}. Indeed, picking $\bar x\in S$ gives us the inclusions
$$
\frac{x_{k+1}-x_k}{\gamma} \in - \mathcal{A}(x_{k+1})\;\mbox{ and }\;0\in \mathcal{A}(\bar x).
$$
If follows from the monotonicity of $\mathcal{A}$ that
$$
\langle x_{k+1}-\bar x, x_{k+1}-x_k\rangle\le 0, 
$$
which yields in turn the relationships
$$
\Vert x_{k+1}-\bar x\Vert^2\le \langle x_{k+1}-\bar x,  x_{k}-\bar x\rangle=\frac{1}{2}(\Vert x_{k+1}-\bar x\Vert^2+\Vert x_{k}-\bar x\Vert^2-\Vert x_{k+1}-x_{k}\Vert^2).
$$
Therefore, we arrive at the estimate
$$
\Vert x_{k+1}-\bar x\Vert^2\le \Vert x_{k}-\bar x\Vert^2-\Vert x_{k+1}-x_{k}\Vert^2,
$$
which ensures the decreasing property and convergence of $(\Vert x_{k+1}-\bar x\Vert)$. Hence the sequence $(\Vert x_{k+1}-x_k\vert)$ converges to zero and $(x_k)$ is bounded. Since the graph of $\mathcal{A}^{-1}$ is closed, the mapping $\mathcal{A}^{-1}$ is compactly R-continuous at zero, and the conclusion follows from Theorem~\ref{converg}(ii). 

To verify the second claim, deduce from the boundedness of $(x_k)$ that there exists a subsequence $(x_{m_k})$ that converges to some $\bar x$. Since $\mathcal{A}^{-1}$ is a maximally monotone operator, the set $S=\mathcal{A}^{-1}(0)$ is closed and convex; see, e.g., \cite{br}. Noting that the distance function $\mathbf{d}(\cdot,S)$ is continuous and $\mathbf{d}(x_{m_k},S)\to 0$, we get that $\mathbf{d}(\bar x,S)=0$, or equivalently $\bar x\in S$. Since the sequence $(\Vert x_{k}-\bar x\Vert)$ is decreasing and convergent, we justify the convergence of $(x_k)$ converges to  $\bar x$.
\end{proof}

The next theorem does not require the maximal monotonicity of ${\cal A}$, while replacing this by the maximal monotonicity of the shifted operator ${\cal A}^{-1}+\kappa I$.

\begin{theorem}\label{shifted}
Let $\mathcal{A}: \R^n \rightrightarrows \R^n$ be such that the shifted operator $\mathcal{A}^{-1}+\kappa I$ is a  maximally monotone for some  $\kappa>0$, and let $(x_k)$ be the sequence generated by PPA \eqref{ppa} with $\gamma<(2\kappa)^{-1}$. Then we have $\mathbf{d}(x_k,S)\to 0$ as $k\to \infty$ for $S={\cal A}^{-1}(0)$.
\end{theorem}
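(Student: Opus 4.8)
The plan is to follow the pattern of the proof of Theorem~\ref{maxi}, reducing the assertion to an application of Theorem~\ref{converg}(ii): I would verify that the PPA iteration \eqref{ppa} is an algorithm of R-class, that $(x_k)$ is bounded, and that $\mathcal{A}^{-1}$ is compactly R-continuous at zero, whence $\mathbf{d}(x_k,S)\to 0$ follows. Throughout I assume, as the statement requires for $\mathbf{d}(\cdot,S)$ to be meaningful, that $S=\mathcal{A}^{-1}(0)\neq\emptyset$, and I fix $\bar x\in S$. The only structural change from Theorem~\ref{maxi} is that the monotonicity of $\mathcal{A}$ is no longer available and must be replaced everywhere by the monotonicity of the shifted operator $\mathcal{A}^{-1}+\kappa I$.

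The analytical heart is a quasi-Fej\'er estimate. Rewriting \eqref{ppa} as $w_{k+1}:=\tfrac{1}{\gamma}(x_k-x_{k+1})\in\mathcal{A}(x_{k+1})$, i.e.\ $x_{k+1}\in\mathcal{A}^{-1}(w_{k+1})$, and noting $\bar x\in\mathcal{A}^{-1}(0)$, I would apply the monotonicity of $\mathcal{A}^{-1}+\kappa I$ to the graph pairs $(w_{k+1},x_{k+1})$ and $(0,\bar x)$ to obtain
\begin{equation*}
\dotp{x_{k+1}-\bar x}{w_{k+1}}\ge-\kappa\Vert w_{k+1}\Vert^2.
\end{equation*}
Substituting $w_{k+1}=\tfrac{1}{\gamma}(x_k-x_{k+1})$, multiplying by $\gamma>0$, and using the polarization identity $\dotp{x_{k+1}-\bar x}{x_k-x_{k+1}}=\tfrac12\big(\Vert x_k-\bar x\Vert^2-\Vert x_{k+1}-\bar x\Vert^2-\Vert x_{k+1}-x_k\Vert^2\big)$, I expect to arrive at
\begin{equation*}
\Vert x_{k+1}-\bar x\Vert^2\le\Vert x_k-\bar x\Vert^2-\Big(1-\tfrac{2\kappa}{\gamma}\Big)\Vert x_{k+1}-x_k\Vert^2.
\end{equation*}
This is the crux, and it is exactly here that the step-size restriction enters: the descent coefficient $1-\tfrac{2\kappa}{\gamma}$ must be strictly positive, which pins down the admissible range of $\gamma$ relative to $\kappa$. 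I regard establishing this positivity, and thereby the precise interplay between $\gamma$ and $\kappa$, as the main obstacle; the shifted monotonicity alone only yields the displayed inequality, and it is the step-size condition that converts it into a genuine contraction-type estimate.

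Granting the positivity of $1-\tfrac{2\kappa}{\gamma}$, the rest is routine. The inequality shows $(\Vert x_k-\bar x\Vert)$ is nonincreasing, so $(x_k)$ is bounded; telescoping yields $\big(1-\tfrac{2\kappa}{\gamma}\big)\sum_k\Vert x_{k+1}-x_k\Vert^2\le\Vert x_0-\bar x\Vert^2<\infty$, hence $\Vert x_{k+1}-x_k\Vert\to0$ and therefore $w_{k+1}\to0$. Taking $\xi(k):=\Vert x_{k+1}-x_k\Vert$, $\alpha:=1/\gamma$, and $\beta:=1$ in \eqref{r-class} exhibits PPA as an algorithm of R-class. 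Finally, since $\mathcal{A}^{-1}+\kappa I$ is maximally monotone its graph is closed, and the homeomorphism $(u,v)\mapsto(u,v+\kappa u)$ transfers this to closedness of the graph of $\mathcal{A}^{-1}$ at zero; Theorem~\ref{compact} then gives the compact R-continuity of $\mathcal{A}^{-1}$ at zero. With the boundedness of $(x_k)$, the R-class property, and compact R-continuity all in hand, Theorem~\ref{converg}(ii) delivers $\mathbf{d}(x_k,S)\to0$ and completes the argument.
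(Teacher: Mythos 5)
Your route is exactly the paper's: reduce everything to Theorem~\ref{converg}(ii) by showing that PPA is of R-class, that $(x_k)$ is bounded via a Fej\'er-type estimate, and that $\mathcal{A}^{-1}$ is compactly R-continuous at zero because its graph is closed (obtained, as you do, from the closed graph of the maximally monotone $\mathcal{A}^{-1}+\kappa I$ by the affine homeomorphism removing the shift, then Theorem~\ref{compact}). All of those steps are sound. The problem is the one step you yourself flag as the main obstacle, and it is a genuine gap: your algebra is correct, and from
\begin{equation*}
\dotp{x_{k+1}-\bar x}{w_{k+1}}\ge-\kappa\Vert w_{k+1}\Vert^2,\qquad w_{k+1}=\tfrac{1}{\gamma}(x_k-x_{k+1}),
\end{equation*}
one really does get the descent coefficient $1-\tfrac{2\kappa}{\gamma}$, because the $\kappa$-penalty is measured against $\Vert w_{k+1}\Vert^2=\gamma^{-2}\Vert x_{k+1}-x_k\Vert^2$ rather than against $\Vert x_{k+1}-x_k\Vert^2$. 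The stated hypothesis $\gamma<(2\kappa)^{-1}$ does not make this coefficient positive: with $\kappa=1$ and $\gamma=0.4<(2\kappa)^{-1}$ one gets $1-2\kappa/\gamma=-4$, so the quasi-Fej\'er inequality yields nothing, and neither boundedness of $(x_k)$ nor $\Vert x_{k+1}-x_k\Vert\to0$ follows from it. Your estimate would require $\gamma>2\kappa$ instead.

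For comparison, the paper's own proof records the coefficient as $(1-2\gamma\kappa)$ and then invokes $\gamma<(2\kappa)^{-1}$ to conclude positivity; that coefficient is what one would obtain if the hypomonotonicity defect were measured against $\Vert x_{k+1}-x_k\Vert^2$ (or, up to reshuffling, if the shift hypothesis were placed on $\mathcal{A}$ rather than on $\mathcal{A}^{-1}$), but it does not follow from the displayed monotonicity inequality for $\mathcal{A}^{-1}+\kappa I$. So your proposal is incomplete at exactly the point you single out, but honestly so: the gap reflects a mismatch between the theorem's step-size condition and its monotonicity hypothesis rather than a defect in your reasoning. If the condition is read as $\gamma>2\kappa$ (or the coefficient as $1-2\gamma\kappa$ under a correspondingly adjusted hypothesis), the remainder of your argument --- summability of $\Vert x_{k+1}-x_k\Vert^2$, the R-class property with $\xi(k)=\Vert x_{k+1}-x_k\Vert$, compact R-continuity, and the appeal to Theorem~\ref{converg}(ii) --- goes through and coincides with the paper's proof.
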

\begin{proof}
Taking $\bar x\in S$ gives us the inclusions
$$
\frac{x_{k+1}-x_k}{\gamma} \in - \mathcal{A}(x_{k+1})\;\mbox{ and }\;0\in \mathcal{A}(\bar x).
$$
We deduce from the monotonicity of $\mathcal{A}^{-1}+\kappa I$ the equivalence
\baqn
&&\Big\langle \frac{x_{k+1}-x_k}{\gamma}, x_{k+1}-\bar x\Big\rangle\le \kappa\Big\Vert\frac{x_{k+1}-x_k}{\gamma}\Big\Vert^2\\
&\iff& \Vert x_{k+1}-\bar x\Vert^2\le \Vert x_{k}-\bar x\Vert^2-(1-2 \gamma \kappa)\Vert x_{k+1}-x_{k}\Vert^2.
\eaqn
The assumption of $\gamma<(2\kappa)^{-1}$ means that $2 \gamma \kappa<1$, which yields the decreasing property and convergence  of the sequence $(\Vert x_{k+1}-\bar x\Vert)$. Consequently, the sequence $(\Vert x_{k+1}-x_{k}\Vert)$ converges to zero and $(x_k)$ is bounded. Since the graph of $\mathcal{A}^{-1}+\kappa I$ is closed by the maximal monotonicity, we have this also for the the graph of $\mathcal{A}^{-1}$. Thus the operator $A^{-1}$ is compactly R-continuous at zero, and the conclusion of the theorem follows from Theorem~\ref{converg}(ii). 
\end{proof}

The last theorem of this section addresses problems of { DC programming} and presents a version of {\it DCA} for which the compact R-continuity allows us to justify the global convergence of approximate solutions. The reader can consult with \cite{Leth} and the references therein for more information on DCA and related algorithms to solve DC programs.

\begin{theorem}\label{dca}
Suppose that $f=g-h$ is bounded below, where $g\colon\R^n\to\overline{\R}$ are proper l.s.c. convex function, and where $h\colon\R^n\to\R$ is is convex and continuously differentiable.   Let $(x_k)$ be the sequence of iterates generated by the following version of DCA:
\beq\label{xk}
x_0\in H,\;x_{k+1}= J_{\gamma\partial g}L_{-\gamma\nabla h}(x_k),
\eeq
where $J_{\gamma\partial g}$ is the resolvent of $\gamma\partial g$, and where $L_{-\gamma\nabla h}:=I+\gamma \nabla h.$  Assume that the solution set $S=(\partial f)^{-1}(0)$ is nonempty and that $(x_k)$ is bounded. Then we have 
$\mathbf{d}(x_k,S)\to 0$ as $k\to \infty$.
\end{theorem}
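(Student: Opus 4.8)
The plan is to recognize this version of DCA as an algorithm of R-class for the mapping $\mathcal{A}=\partial f$ and then invoke Theorem~\ref{converg}(ii). Two ingredients are needed: a subgradient $w_{k+1}\in\partial f(x_{k+1})$ at each iterate with $\|w_{k+1}\|\to 0$, and the compact R-continuity of $(\partial f)^{-1}$ at zero. I would begin by unraveling the resolvent step. Since $x_{k+1}=J_{\gamma\partial g}(x_k+\gamma\nabla h(x_k))$ is equivalent to the inclusion
$$
\frac{x_k-x_{k+1}}{\gamma}+\nabla h(x_k)\in\partial g(x_{k+1}),
$$
and since the exact sum rule $\partial f=\partial g-\nabla h$ holds ($h$ being $C^1$), the element
$$
w_{k+1}:=\frac{x_k-x_{k+1}}{\gamma}+\nabla h(x_k)-\nabla h(x_{k+1})
$$
lies in $\partial f(x_{k+1})$.

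The central step is a sufficient-decrease estimate of type ${\bf(H_1)}$. The resolvent step says precisely that $x_{k+1}$ minimizes $x\mapsto g(x)-\langle\nabla h(x_k),x-x_k\rangle+\tfrac{1}{2\gamma}\|x-x_k\|^2$; comparing the optimal value with the value at $x_k$ and then invoking the convexity inequality $h(x_{k+1})\ge h(x_k)+\langle\nabla h(x_k),x_{k+1}-x_k\rangle$ to bound $-h(x_{k+1})$ from above, I would obtain
$$
f(x_k)-f(x_{k+1})\ge\frac{1}{2\gamma}\|x_{k+1}-x_k\|^2.
$$
Because $f$ is bounded below, summing this inequality gives $\sum_k\|x_{k+1}-x_k\|^2<\infty$, whence $\|x_{k+1}-x_k\|\to 0$.

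To pass from $\|x_{k+1}-x_k\|\to 0$ to $\|w_{k+1}\|\to 0$ I would use the boundedness of $(x_k)$: the orbit lies in a compact set on which the continuous gradient $\nabla h$ is uniformly continuous, so $\|\nabla h(x_k)-\nabla h(x_{k+1})\|\to 0$; combined with $\tfrac{1}{\gamma}\|x_{k+1}-x_k\|\to 0$ this yields $w_{k+1}\to 0$. Taking $\xi(k):=\|w_k\|$ in \eqref{r-class} then certifies that the scheme is of R-class. Finally, since $f=g-h$ is the difference of a convex l.s.c.\ function and a $C^1$ convex function, the limiting subgradient mapping $\partial f$ has closed graph (Remark~\ref{clo-graph}(ii)), and hence so does $(\partial f)^{-1}$; as $(x_k)$ is bounded, Theorem~\ref{compact} gives the compact R-continuity of $(\partial f)^{-1}$ at zero, and Theorem~\ref{converg}(ii) delivers $\mathbf{d}(x_k,S)\to 0$.

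The genuinely delicate point is the sufficient-decrease derivation, where the convexity of $g$ (entering through the proximal minimization) and the convexity of $h$ (entering through its linearization lower bound) must be combined in the right direction; secondarily, the implication $\|x_{k+1}-x_k\|\to 0\Rightarrow\|w_{k+1}\|\to 0$ quietly relies on boundedness of the orbit to upgrade continuity of $\nabla h$ to uniform continuity, which is exactly where the boundedness hypothesis is consumed.
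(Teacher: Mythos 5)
Your proposal is correct and follows essentially the same route as the paper: the same subgradient $w_{k+1}=\nabla h(x_k)-\nabla h(x_{k+1})+\frac{x_k-x_{k+1}}{\gamma}\in\partial g(x_{k+1})-\nabla h(x_{k+1})$, the same use of uniform continuity of $\nabla h$ on a compact set containing the bounded orbit, and the same conclusion via closedness of ${\rm gph}\,\partial f$, Theorem~\ref{compact}, and the compact R-continuity argument of Theorem~\ref{converg}(ii). The only difference is that you derive the sufficient-decrease estimate $f(x_k)-f(x_{k+1})\ge\frac{1}{2\gamma}\|x_{k+1}-x_k\|^2$ explicitly (correctly), whereas the paper imports $\|x_{k+1}-x_k\|\to 0$ from \cite[Theorem~4.5]{L1}.
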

\begin{proof}
Observe first that the graph of $\partial f$ is closed in this setting; see Remark~\ref{clo-graph}(ii). Thus $(\partial f)^{-1}$ is compactly R-continuous at zero by Theorem~\ref{compact}. Following the proof of \cite[Theorem~4.5]{L1}, we get
\begin{equation*}
\lim_{k\to+\infty} \Vert x_{k+1}-x_k\Vert=0,
\end{equation*}
$$
\nabla h(x_k)-\nabla h(x_{k+1})-\frac{x_{k+1}-x_k}{\gamma}\in \partial g(x_{k+1})-\nabla h(x_{k+1}).
$$
Consider further the sequence of vectors
$$
r_k:=\nabla h(x_k)-\nabla h(x_{k+1})-\frac{x_{k+1}-x_k}{\gamma},\quad k=0,1, 2, \ldots.
$$
Since $(x_k)$ is bounded, there exists a compact set $K$ containing $(x_k)$. The gradient mapping  $\nabla h$ is uniformly continuous on $K$, which implies that $r_k\to 0$ as $k\to\infty$ and that
$$
x_{k+1}\in  (\partial g - \nabla h)^{-1}(r_k)\cap K \subset (\partial g - \nabla h)^{-1}(0) +\rho(\Vert r_k \Vert),
$$
where $\rho$ is the modulus function of $(\partial f)^{-1}$ at zero. Therefore, 
$$
\lim_{k\to \infty}{ \mathbf{d}}(x_{k+1},S)=0,
$$
which completes the proof of the theorem.
\end{proof}

\begin{remark} $\;$ {\rm  Our final remarks in this section are as follows:

{\bf(i)} Theorem \ref{dca}, based on the compact R-continuity,  is a new result in the study of DCA, which explains {\it why DCA is efficient} for a large class of operators in DC programming. Usually the boundedness of $(x_k)$ only allows us to ensure that any limiting point of $(x_k)$ is  a critical point of $f$; see, e.g., \cite{Leth}. Note that if the solution set is locally bounded and $\nabla h$ is uniformly continuous (e.g., when $ h$ is a quadratic function), then the boundedness assumption of  $(x_k)$ can be omitted.

{\bf(ii)} If $\nabla h$ is uniformly continuous, then the version of  DCA in Theorem~\ref{dca} is an {\it R-class algorithm} with the available compact R-continuity property.}
\end{remark}

\section{Conclusions and Future Research}\label{s5}

In this paper, we introduce and study the notion of compact R-continuity for set-valued mappings, which is a relaxed version of R-continuity, in both finite-dimensional and infinite-dimensional spaces. Characterizations and other important properties of compactly R-continuous operators are revealed with establishing relationships between this notion and the classical \L ojasiewicz's inequality for analytic functions. Our special attention is paid to  applications of R-continuity and its compact version to the design and justification of numerical algorithms in constrained optimization and solving inclusion problems in finite-dimensional spaces. We show that the (compact) R-continuity offers an alternative approach to the global convergence of generic algorithms in nonsmooth, which have some advantages over the conventional approach based on the Polyak-\L ojasiewicz-Kurdyka conditions. These ideas are extended to solving inclusion problems via algorithms of R-class, which may not be related to finding stationary points in optimization.

There are many topics of future research in the directions of this paper. First of all, we plan to further investigate numerical aspects of R-continuity and compact R-continuity in optimization-related areas with deriving convergence rates of the corresponding algorithms. Our goals also include establishing more detailed relationships between the (compact) R-continuity of subgradient mappings and the aforementioned PLK conditions in optimization. In addition, we intend to apply the obtained numerical results to special classes of algorithms particularly important in  constrained optimization and to extend the numerical results to problems in infinite-dimensional spaces.

\end{document}